\newenvironment{enumeratei}{\begin{enumerate}[\upshape (1)]}
    {\end{enumerate}}
\newtheorem{theorem}{Theorem}[section]
\newtheorem*{thm1}{Theorem 1}
\newtheorem*{thm3}{Theorem 3}
\newtheorem*{thm4}{Theorem 4}
\newtheorem*{thm5}{Theorem 5}
\newtheorem*{thmG}{Theorem G}
\newtheorem*{prop2}{Proposition 2}
\newtheorem{corollary}[theorem]{Corollary}
\newtheorem{lemma}[theorem]{Lemma}
\newtheorem{proposition}[theorem]{Proposition}
\theoremstyle{definition}
\def\irr#1{{\rm Irr}(#1)}
\begin{document}

\title[Non-trivial intersections of kernels of irreducible characters]{Finite groups with non-trivial intersections of kernels of all but one irreducible characters}

\author[M. Bianchi]{Mariagrazia Bianchi}
\address{Mariagrazia Bianchi, Dipartimento di Matematica F. Enriques,
\newline Universit\`a degli Studi di Milano, via Saldini 50,
20133 Milano, Italy.}
\email{mariagrazia.bianchi@unimi.it}

\author[M. Herzog]{Marcel Herzog}
\address{Marcel Herzog, Schoool of Mathematical Sciences, \newline Tel-Aviv
University, Tel-Aviv, 69978, Israel.} \email{herzogm@post.tau.ac.il}


\thanks{The second author is grateful to the Department of
Mathematics of the University of Milano for its hospitality and
support, while this investigation was carried out. The authors were partially 
supported by the italian INdAM-GNSAGA}
\subjclass[2000]{20C15}

\keywords{Finite groups; Complex characters}

\begin{abstract}
In this paper we consider finite groups $G$ satisfying the following 
condition: $G$ has two columns in its character table which differ by exactly one 
entry. It turns out that such groups exist and they are exactly the finite groups
with a non-trivial intersection of the kernels of all but one irreducible 
characters or, equivalently, finite groups with an irreducible character
vanishing on all but two conjugacy classes. We investigate such groups 
and in particular we characterize their subclass, which properly contains 
all finite groups with non-linear characters of distinct degrees, 
which were characterized by Berkovich, Chillag and Herzog in 1992.
\end{abstract}

\maketitle

\section{Introduction}

It is well known that the intersection of kernels of all irreducible characters
of a finite group is trivial. This gives rise to the following question: which
finite groups, if any, have an intersection of kernels of all but one
irreducible characters which is non-trivial?  

We were lead to this problem by considering an apparently more general question:
which finite groups have two columns in their character table which differ
by exactly one entry? This problem is more general, since if an intersection 
of kernels of all but one
irreducible characters of a finite group is non-trivial and if $b\neq 1$ 
belongs to such an intersection, then clearly the column of the character degrees and the
column of $b$ differ by exactly one entry. The surprising fact is that these
two families of finite groups coincide.

So in this paper we shall investigate finite groups with two columns 
in their character table differing by exactly one entry. Such groups
will be called \emph{$CD1$-groups} (Columns (of the
character table) Differing (by) 1 (entry)). To eliminate trivialities, 
we shall assume that if $G\in CD1$, then $|G|>2$. This problem 
was suggested to us by our late colleague David Chillag. 

All groups in this paper are finite.  The set of all  
irreducible characters of a group $G$ will be denoted by $\irr G$. We shall write
$$\irr G=\{\chi_1,\chi_2,\dots, \chi_k\},$$ 
where $\chi_1=1_G$, the principal character of $G$, and the degrees of the characters
satisfy
$$\chi_1(1)=1\leq \chi_2(1)\leq \dotsb \leq \chi_k(1).$$
If $x\in G$, we shall denote the (conjugacy) class of $x$ in $G$ by $x^G$.

By the orthogonality relations, two columns of the 
character table of a group $G$  cannot be equal. But could
a group $G$, satisfying $|G|>2$,  have two columns           
in its character table differing by exactly one entry, or, in other words, be 
a $CD1$-group? We shall show that such groups
exist and we shall try to determine their structure.

Let $G$ be a $CD1$-group and suppose that the two columns of
the character table of $G$ differing by one entry correspond to the classes
$a^G$ and $b^G$ of $G$. A priori, these classes are not unique. 
However, we shall show in Section 2 that
one of these classes, say $a^G$, must be the unit class $\{1\}$ (hence $a=1$).
Moreover, the two columns 
corresponding to $a^G$ and $b^G$ can 
differ only in the row of
the character $\chi_k$, and the degree of $\chi_k$ must be larger than that
of any other $\chi_i$. In particular, $\chi_k(1)>\chi_{k-1}(1)$.

Denote
$$\chi_i(1)=a_i\quad \text{and}\quad \chi_i(b)=b_i\quad \text{for}\ i=1,2,\dots,k.$$
Then, by the results of Section 2 mentioned above, we have
$$b_i=a_i=\chi_i(1)\quad \text{for}\quad i=1,2,\dots,k-1$$
and 
$$b_k\neq a_k.$$
By the orthogonality relations, $b_k$ is a negative integer satisfying
$$-a_k\leq b_k\leq -1.$$

On the other hand, we shall show that the $k$-th row of the character table of
$G$ is 
$$(a_k,b_k,0,0,\dots,0).$$
Hence the class $b^G$ is unique. Consequently, $G$ is a $CD1$-group
if and only if an intersection of kernels of all but one
irreducible characters of $G$ is non-trivial.

Let us denote
$$N=\bigcap_{i=1}^{k-1} \ker \chi_i.$$
Then $N$ is a normal subgroup of $G$ and $\{1\}\cup b^G\subseteq N$.
The uniqueness of the class $b^G$ implies that
$$N=\{1\}\cup b^G.$$
Therefore $G$ is a $CD1$-group if and only if 
$N\neq \{1\}$.
In particular, an intersection of kernels of all but one 
irreducible characters of  a group $G$ is trivial, except, possibly,
if the missing character's degree is larger than those of the other
characters. 

If  $G$ is a $CD1$-group, then $N$ is the union of two conjugacy classes 
of $G$. Hence $N$
is a minimal normal
subgroup of $G$, which is an elementary abelian $p$-group for some prime $p$.
Thus
$$|N|=|\{1\}\cup b^G|=p^n$$
for some positive integer $n$ and $b$ is of order $p$.

We shall show also in Section 2 that 
$$|C_G(b)|=|P|,$$
where $P$ denotes a Sylow $p$-subgroup of $G$. Hence $C_G(b)=P$ for some
Sylow $p$-subgroup $P$ of $G$ and  
$$C_G(N)=\bigcap_{x\in G}C_G(b^x)=\bigcap_{x\in G}P^x=O_p(G).$$

We have seen above that a  $CD1$-group has an irreducible character which 
vanishes on all but two conjugacy classes. We shall show in Section 2 that 
these two properties are equivalent:
{\it an irreducible character of a group $G$
vanishes on all but two conjugacy classes of $G$ if and only if the columns
coresponding to these classes in the character table of $G$ differ by
exactly one entry.} Moreover, if $G$ is a $CD1$-group, then $(G,N)$ is 
a Camina pair. 

In his paper [6], Stephen Gagola investigated groups which have an irreducible
character which
vanishes on all but two conjugacy classes. Such groups of order greater than
$2$ will be called  {\it
Gagola groups}. The above
observation implies that Gagola groups coincide with the $CD1$-groups.
In particular, our notation applies also to Gagola groups.

Let $G$ be a Gagola group. In his paper Gagola  
completely determined the structure  of the quotient group
$$G/C_G(N)=G/O_p(G).$$
In particular, he proved that if $G$ is
solvable, then a Sylow $p$-subgroup of $G/O_p(G)$ is abelian.
Moreover, if $G$ is solvable, then
$G/O_p(G)$ has a normal $p$-complement, which is
isomorphic to the multiplicative group of a near-field.
The multiplicative groups of  finite near fields  are
in one-to-one correspondence with the class of doubly transitive Frobenius
groups. 

A Frobenius group $G$ is called {\it doubly transitive} if
$$|H|=|F|-1,$$
where $F$ denotes the (Frobenius) kernel of $G$ and $H$ denotes a
(Frobenius) complement of $G$. This implies that
$$|G|=(p^n-1)p^n\quad \text{with} \quad |F|=p^n\ \text{and} \ |H|=p^n-1,$$
where $p$ is a prime and $n$ is a positive integer. Moreover, $F$ is an
elementary abelian $p$-group. We shall use this notation and these fact
throughout this paper.
The finite near-fields were classified by Hans Zassenhaus in [12].

In the non-solvable case, Gagola's result is much more complicated.

The structure of $O_p(G)$ for Gagola groups (or $CD1$-groups) $G$ is 
{\bf an open problem}. Gagola
showed that there is no bound on the derived length or the nilpotence
class of $O_p(G)$.
  
Concerning the structure of the Gagola groups themselves,
Gagola proved the following theorem (see [6], Theorem 6.2).
\begin{thmG} If $G$ is a Gagola group, then $N=C_G(N)$
if and only if $G$ is a doubly transitive Frobenius group.
\end{thmG}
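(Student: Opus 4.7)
The plan is to prove both implications of the equivalence; the reverse direction is short, and the forward direction hinges on the Camina-pair structure of $(G,N)$.

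For the reverse implication, suppose $G$ is a doubly transitive Frobenius group with kernel $F$ of order $p^n$ and complement $H$ of order $p^n-1$. Since $F$ is the abelian normal Sylow $p$-subgroup of $G$, we have $F = O_p(G)$, and the Frobenius property $C_G(x) \subseteq F$ for every $1 \neq x \in F$, combined with the abelianness of $F$, yields $C_G(F) = F$. By Section~2, $C_G(N) = O_p(G)$, so $C_G(N) = F$. Now $b$ has order $p$ and hence lies in the unique Sylow $p$-subgroup $F$, and the double transitivity forces $b^G = F \setminus \{1\}$; therefore $N = \{1\} \cup b^G = F = C_G(N)$.

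For the forward implication, assume $N = C_G(N)$. The key step is to show that $G/N$ is abelian by a conjugacy-class count. Because $(G,N)$ is a Camina pair, every $x \in G \setminus N$ satisfies $x^G = xN$, so the $G$-classes outside $N$ biject with the nontrivial cosets of $N$; together with the two classes $\{1\}$ and $b^G$ inside $N$, this gives $k = |G/N| + 1$ conjugacy classes of $G$. On the other hand, $\chi_1, \dots, \chi_{k-1}$ all contain $N$ in their kernel and so correspond exactly to the elements of $\irr{G/N}$; hence $G/N$ has $k - 1 = |G/N|$ conjugacy classes and is therefore abelian.

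Next, $G/N$ acts on $N$ by conjugation with trivial kernel $C_G(N)/N = 1$, so the action is faithful, and $b^G = N \setminus \{1\}$ shows it is transitive on $N \setminus \{1\}$. Any abelian group that acts faithfully and transitively on a set acts regularly, so $|G/N| = |N| - 1 = p^n - 1$ and $|G| = p^n(p^n-1)$. Thus $N$ is the unique Sylow $p$-subgroup of $G$, and Schur--Zassenhaus provides $G = N \rtimes H$ with $|H| = p^n - 1$. The regular action of $H$ on $N \setminus \{1\}$ is fixed-point-free, so $G$ is Frobenius with kernel $N$, complement $H$, and $|H| = |N| - 1$: a doubly transitive Frobenius group. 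The main obstacle is the derivation that $G/N$ is abelian, which rests on the Camina-pair description of classes outside $N$ combined with the elementary bijection between irreducible characters of $G$ containing $N$ in their kernel and irreducible characters of $G/N$; once this is in hand, the regularity principle for faithful transitive abelian actions and Schur--Zassenhaus deliver the Frobenius structure routinely.
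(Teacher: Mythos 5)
Your reverse implication is fine: for a doubly transitive Frobenius group the kernel $F$ is an abelian normal Sylow $p$-subgroup with $C_G(x)\subseteq F$ for $1\neq x\in F$, so $C_G(F)=F$, and semiregularity of the complement of order $p^n-1$ on $F\setminus\{1\}$ forces $b^G=F\setminus\{1\}$, hence $N=F=C_G(N)$ (this also follows from Proposition 3.3). Note, however, that the paper does not prove Theorem G at all: it is quoted from Gagola [6, Theorem 6.2], and the authors explicitly say they could not prove the closely related Theorem 4 without Gagola's results, which should already signal that a short argument for the forward direction is suspect.

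The forward implication in your proposal has a genuine error at its key step. The Camina-pair condition gives only $xN\subseteq x^G$ for $x\in G\setminus N$, not $x^G=xN$. Indeed, from $|C_G(x)|=|C_{G/N}(xN)|$ one gets $|x^G|=|N|\cdot|(xN)^{G/N}|$, so $x^G$ is the full preimage of the class of $xN$; thus the classes of $G$ outside $N$ biject with the nontrivial classes of $G/N$, and your count merely reproduces the tautology that $G/N$ has $k-1$ irreducible characters and $k-1$ classes --- it gives no information. In fact, $x^G=xN$ for all $x\notin N$ is \emph{equivalent} to $G/N$ being abelian, so the argument assumes what it sets out to prove. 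Worse, the intermediate conclusion is simply false: the non-solvable doubly transitive Frobenius groups of Section 4, e.g.\ $G=F\rtimes H$ with $|F|=11^2$ and $H\cong \SL(2,5)$, are Gagola groups with $N=F=C_G(N)$, yet $G/N\cong \SL(2,5)$ is non-abelian; so $|G/N|=|N|-1$ cannot be obtained via abelianness and regularity. The real content of the forward direction is to show, from $N=C_G(N)=O_p(G)$, that $t=0$, i.e.\ $|C_G(b)|=p^n=|N|$ (equivalently $b_k=-1$), after which Proposition 3.1 yields the Frobenius structure; this step is the substance of Gagola's Theorem 6.2 and is not recovered by your class count.
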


At the end of his paper, Gagola constructs for $p=2$ and $p=3$ 
examples of Gagola groups
which are not $p$-closed. These are important examples of 
non-$p$-closed Camina pairs. 

Our research was
concentrated on the structure of $CD1$-groups themselves, satisfying certain
conditions with respect to the entries $a_k$ and $b_k$.

A $CD1$-group will be called {\it extreme} if 
$$\text{either}\quad b_k=-1\quad \text{or}\quad b_k=-a_k.$$
Our main result in this paper is the following theorem, in which the
extreme $CD1$-groups  are completely determined. Recall that
a group $G$ is called {\it of central type}  if there exists $\chi\in \irr G$
such that
$$[G:Z(G)]=\chi(1)^2.$$

\begin{thm1}
A group $G$ is an extreme $CD1$-group if and only if one
of the following holds:
\begin{enumeratei}
\item $b_k=-a_k$ and $G$ is a $2$-group of central type with $|Z(G)|=2$;
\item $b_k=-1$ and $G$ is a a doubly transitive Frobenius group.
\end{enumeratei}
\end{thm1}

Moreover, we shall prove the following proposition:

\begin{prop2}
The class of extreme $CD1$-groups properly contains
the class of groups with 
non-linear irreducible characters of distinct degrees.
\end{prop2}

Groups with                  
non-linear irreducible characters of distinct degrees ($DD$-groups in short)
were determined by  Berkovich, Chillag
and Herzog in [1]. Proposition 2 implies that Theorem 1
is a proper generalization of
the theorem in [1] (see Theorem 4.2), which itself is a 
proper generalization of Seitz's
theorem in [11] (see Theorem 4.1), in which groups with 
one non-linear irreducible character
were determined. 

Other generalizations of the  Berkovich, Chillag
and Herzog theorem appeared in the literature. We mention
here four such papers: [2] by Berkovich, Isaacs and Kazarin in 1999, 
[9] by Maria Loukaki in 2007, [3] by Dolfi, Navarro and Tiep in
2013 and [4] by Dolfi and Yadav in 2016. 

It follows from the characterization of the $DD$ groups in  [1] that all such
groups are solvable. 
On the other hand, we shall show that the set
of extreme $CD1$-groups contains non-solvable groups, and in particular,
it contains a perfect group. For details, see Section 4,
where Proposition 2 is proved.

In Section 2 we shall also show that 
if $G$ is a $CD1$-group, then
it is of even order and either $|Z(G)|=2$ or the center
of $G$ is trivial. Moreover, if $|Z(G)|=2$, then $Z(G)=\{1,b\}$ and
$b_k=-a_k$.
Hence $CD1$-groups $G$ with  $|Z(G)|=2$ are
extreme $CD1$-groups and Theorem 1 implies the following general result:

\begin{thm3} A group $G$ is a $CD1$-group with $Z(G)\neq 1$ if and only if
it is  a $2$-group of central type with $|Z(G)|=2$.
\end{thm3}

However, the problem of characterizing $CD1$-groups with a
trivial center is {\bf still open}. There exist such groups which are non-extreme.
For example, there exist two groups of order $54$ which are non-Frobenius
$DC1$-groups
with trivial centers and with $a_k=6$ and $b_k=-3$.

Theorem G and Theorem 1 immediately yield the following result.
\begin{thm4} If $G$ is a $CD1$-group, then $N=C_G(N)$
if and only if $b_k=-1$.
\end{thm4}
We were not able to prove this result directly, without using the results 
of Gagola. 
 
Recalling that the element $b$ has prime order $p$, in Section 3 we shall also 
characterize $CD1$-groups $G$ satisfying the 
following conditions:
$p\nmid a_k$ (see Proposition 3.1), $a_k=p^s$ for some positive integer $s$ 
(see  Proposition 3.2)
and $G$ is an $r$-group for some prime $r$ (see Corollary 3.5).

Finally, in Section 5 we shall characterize $CD1$-groups with $a_k$ being  
any prime power
(see Theorem 5).

The structure of this paper is as follows. In Section 2 the basic properties 
of $CD1$-groups will be determined, inluding a classification result 
(see Proposition 2.7). 
In Section 3 we shall
classify $CD1$-groups satisfying $b_k=-1$ and those satisfying $b_k=-a_k$
(see Propositions 3.1 and 3.2, respectively). These results will imply 
Theorem 1. Theorems 3 will be also proved in Section 3.
The relation between
extreme $CD1$-groups and $DD$-groups will be described in Section 4, including 
the proof of Proposition 2. This proposition implies that our Theorem 1
is a proper generalization of the Berkovich-Chillag-Herzog theorem. 
Finally,  Section 5 will be devoted to the
classification of  $CD1$-groups with $a_k$ being a power of a prime 
(see Theorem 5). In particular, $CD1$-groups with $a_k$ being a prime are
determined in Corollary 5.1.

\section{Basic properties of $CD1$-groups}
Recall that a finite group $G$ is called
a {\it $CD1$-group} if $|G|>2$ and two columns of the character table 
of $G$ ($CT(G)$ in short) differ by exactly one entry.

Suppose that these columns correspond to the two 
classes $a^G$ and $b^G$ for some $a,b\in G$ 
and they differ only in row $j$. Thus
$$\chi_i(a)=\chi_i(b) \quad \text{for $i\neq j$}\quad \text{and}\quad \chi_j(a)
\neq \chi_j(b).$$ 
Two such classes will be called
(temporarily) {\it special classes} of $G$. 

We shall also use the following notation: $g=|G|$ and if $u\in G$, then
the column in the $CT(G)$ corresponding to $u^G$
will be denoted by
$U$ and $\chi_i(u)$ will be denoted by $u_i$ for $1\leq i\leq k$. 

In particular,
the column in the $CT(G)$ corresponding to $a^G$
will be denoted by $A$ and $a_i=\chi_i(a)$  for $1\leq i\leq k$. 
Similarly, $B$ will denote the column corresponding to $b^G$ and $b_i=\chi_i(b)$ 
for all $i$. 

Moreover,
if also $v\in G$, then the product of the columns $U$ and $V$ corresponding to
$u^G$ and $v^G$, respectively, is defined as follows:
$$UV=\sum_{i=1}^k \chi_i(u)\chi_i(v)=\sum_{i=1}^k u_iv_i.$$  
First we prove:

\begin{lemma} Let $G$ be a $CD1$-group  and let 
$a^G,b^G$ be special classes of $G$ with $a_j\neq b_j$.
Then we may assume, without loss of generality, that $a^G=\{1\}$ is the 
trivial class and $a_i=\chi_i(1)$ for all $i$. Moreover,
$$j=k,\quad a_k>a_{k-1}\geq 1$$
and $b_k$ is an integer satisfying
$$-a_k\leq b_k\leq -1.$$
We also have
$$g=a_k^2-b_ka_k,$$
$$a_k^2+a_k\leq g\leq 2a_k^2$$
and 
$$\{1\}\cup b^G\subseteq \bigcap_{i=1}^{k-1} \ker \chi_i.$$
\end{lemma}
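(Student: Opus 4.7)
The proof proceeds by repeatedly applying column orthogonality, first to pin down which row is the differing one, then to compute $b_j$ explicitly.

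\textbf{Step 1: One of $a,b$ is the identity.} The plan is to show this using column orthogonality of $A$ and $B$ with the degree column. Let $D$ denote the column of the identity class, so $D_i = \chi_i(1) > 0$. If neither $a$ nor $b$ equals $1$, then by column orthogonality $D \cdot \overline{A} = 0$ and $D \cdot \overline{B} = 0$. Subtracting and using $a_i = b_i$ for $i \neq j$ yields
\[
0 = \sum_{i=1}^k \chi_i(1)\bigl(\overline{a_i}-\overline{b_i}\bigr) = \chi_j(1)\bigl(\overline{a_j}-\overline{b_j}\bigr),
\]
forcing $a_j = b_j$, a contradiction. So we may relabel if necessary and assume $a = 1$, whence $a_i = \chi_i(1)$ for every $i$.

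\textbf{Step 2: Compute $b_j$ and deduce it is a negative integer.} Now the column orthogonality $A\cdot \overline{B} = 0$ reduces, using $a_i = b_i = \chi_i(1) = a_i$ for $i \neq j$, to
\[
a_j \overline{b_j} = -\sum_{i \neq j} a_i^{\,2}.
\]
The right side is a negative rational number and $a_j > 0$, so $b_j$ is a negative rational; being an algebraic integer, $b_j \in \mathbb{Z}_{<0}$. Combining this with $A\cdot \overline{A} = g$, which gives $\sum_{i\neq j} a_i^2 = g - a_j^2$, we obtain $a_j b_j = a_j^2 - g$, i.e.\ $g = a_j^2 - b_j a_j = a_j(a_j - b_j)$. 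The standard bound $|\chi_j(b)| \le \chi_j(1)$ together with $b_j < 0$ gives $-a_j \le b_j \le -1$, and substituting into $g = a_j(a_j - b_j)$ yields $a_j^2 + a_j \le g \le 2a_j^2$.

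\textbf{Step 3: Force $j = k$.} From $g \le 2a_j^2$ and $g = \sum_i a_i^2$, we get $\sum_{i\neq j} a_i^2 \le a_j^2$. If $j < k$, then since $\chi_1 = 1_G$ contributes a $1$ to this sum,
\[
a_k^2 + 1 \;\le\; a_k^2 + a_1^2 \;\le\; \sum_{i\neq j} a_i^2 \;\le\; a_j^2,
\]
so $a_j > a_k$, contradicting maximality of $a_k$. Hence $j = k$. The same estimate, now with $j = k$, gives $a_{k-1}^2 + 1 \le a_k^2$, hence $a_k > a_{k-1} \ge 1$.

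\textbf{Step 4: The kernel containment.} This is immediate: for $i \le k-1$ we have $\chi_i(b) = b_i = a_i = \chi_i(1)$, which is precisely the condition $b \in \ker \chi_i$; trivially $1 \in \ker \chi_i$ as well, giving $\{1\} \cup b^G \subseteq \bigcap_{i=1}^{k-1} \ker \chi_i$.

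The only step with any subtlety is Step 1 (showing $a = 1$), but the orthogonality-subtraction trick dispatches it cleanly; everything else is bookkeeping with the two orthogonality relations and the trivial bound $|\chi(g)| \le \chi(1)$.
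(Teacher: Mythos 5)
Your proof is correct and takes essentially the same route as the paper's: orthogonality of the columns $A$ and $B$ against the degree column to force $a=1$, the relation $A\overline{B}=0$ to get $g=a_j^2-b_ja_j$ together with integrality and negativity of $b_j$, and the comparison of $g\le 2a_j^2$ with the sum of the squared degrees to force $j=k$ and $a_k>a_{k-1}$. The only point worth adding is the paper's one-line remark that $|G|>2$ forces $k\ge 3$, which your inequality $a_{k-1}^2+1\le a_k^2$ implicitly uses in treating the indices $1$ and $k-1$ as distinct.
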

\begin{proof}
If neither $a$ nor $b$ is equal to $1$, then by the orthogonality relations (OR in short)
$$\sum_{i=1}^k \chi_i(a)\chi_i(1)=\sum_{i=1}^k \chi_i(b)\chi_i(1)=0.$$
It follows, by our assumptions, that $\chi_j(a)\chi_j(1)=\chi_j(b)\chi_j(1)$
and hence $\chi_j(a)= \chi_j(b)$, a contradiction.

So we may assume, without loss of generality, that $a=1$ and $A$
is the column of the degrees $a_i=\chi_i(1)$ in the $CT(G)$. 
Thus $b_i=a_i=\chi_i(1)$ for all $i$ except $i=j$ and consequently
$$\{1\}\cup b^G\subseteq  \bigcap_{\chi_i\in \irr G\setminus \chi_j}\ker \chi_i.$$
Moreover, by the OR,  we have
$$0=BA=\sum_{i=1}^k a_i^2-a_j^2+b_ja_j=g-a_j^2+b_ja_j,$$
so $b_j$ is an integer and
$$g=a_j^2-b_ja_j.$$  
Since $g\neq 1$, it follows that $g>a_j^2$ and hence $-b_j\geq 1$.
But $-a_j\leq b_j\leq a_j$, so 
$$-a_j\leq b_j\leq -1.$$
Thus 
$$a_j^2+a_j\leq g\leq 2a_j^2$$
and $g>2$ implies that $k>2$ and  $a_j>1$. If $a_j\leq a_{k-1}$,
then $g>a_k^2+a_{k-1}^2\geq 2a_j^2$, a contradiction.
Hence
$$j=k\quad \text{and}\quad a_k>a_{k-1}.$$
By replacing $j$ by $k$ in the previous statements,
we obtain the required results. 
\end{proof}

This lemma implies that if $G$ is a $CD1$-group, then
an intersection of kernels of all but one
irreducible characters of $G$ is non-trivial. The converse of this statement
was noticed in the Introduction. Hence the two families of groups coincide.

Moreover, as noticed in the Introduction,
an intersection of kernels of all but one
irreducible characters of  a group $G$ is trivial, except, possibly,
if the missing character's degree is larger than those of the other
characters.
 
We continue with other lemmas, using the notation of Lemma 2.1.
We shall also assume that the first column of the $CT(G)$ corresponds
to the class of $a=1$, i.e. it is the column of the degrees of the characters,
and the second column corresponds to the class of $b$.  In the next lemma
we shall also establish the equivalence between $CD1$-groups and groups 
with an
irreducible character which vanishes on all but two conjugacy classes.

\begin{lemma} (a) Let $G$ be a $CD1$-group. Then
$$\{1\}\cup b^G\subseteq G^{\prime},$$
so
$$|G^{\prime}|\geq 1+|b^G|.$$
Moreover, the $k$-th row of the $CT(G)$ is:
$$(a_k,b_k,0,0,\dots, 0).$$
Hence the non-unit special class is unique.

(b) A group $G$ is a  $CD1$-group if and only if $G$ has an
irreducible character which vanishes on all but two conjugacy classes.
\end{lemma}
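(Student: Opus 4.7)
The plan for (a) is to prove the three assertions in sequence, each feeding into the next. For the inclusion $\{1\}\cup b^G\subseteq G'$, observe that Lemma 2.1 gives $b\in \bigcap_{i<k}\ker\chi_i$; since $\chi_k$ has strictly the largest degree by Lemma 2.1, every linear character of $G$ appears among $\chi_1,\dotsc,\chi_{k-1}$, so that intersection is contained in the intersection of the kernels of the linear characters, which is $G'$. Normality of $G'$ then yields $b^G\subseteq G'$, and $|G'|\geq 1+|b^G|$ follows.

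The central computation is the $k$-th row. For $c\in G$ with $c\neq 1$ and $c\notin b^G$, column orthogonality of the column $C$ against columns $A$ and $B$ respectively, together with the facts that $a_i$ is real and $b_k$ is an integer (Lemma 2.1), give
$$\sum_{i=1}^{k}a_i\chi_i(c)=0 \quad\text{and}\quad \sum_{i=1}^{k-1}a_i\chi_i(c)+b_k\chi_k(c)=0.$$
Subtracting and using $a_k\neq b_k$ forces $\chi_k(c)=0$, establishing the row $(a_k,b_k,0,\dotsc,0)$. Uniqueness of $b^G$ follows almost immediately: if $c^G$ is another non-unit special class paired with $\{1\}$, then Lemma 2.1 forces the distinguishing row to be $k$ and gives $\chi_i(c)=a_i$ for $i<k$, so column orthogonality with $A$ produces $(g-a_k^2)+a_k\chi_k(c)=0$, whence $\chi_k(c)=(a_k^2-g)/a_k=b_k\neq 0$; the $k$-th row pattern just established then forces $c\in b^G$.

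Part (b) is then almost free. The forward direction is a reading of (a): $\chi_k$ is zero outside $\{1\}\cup b^G$. For the converse, let $\chi_j$ be irreducible and vanish off $a^G\cup b^G$ while taking nonzero values on both. Since $|G|>2$ implies at least three conjugacy classes and the principal character is nowhere zero, $j\neq 1$; the relations $\langle\chi_j,1_G\rangle=0$ and $\langle\chi_j,\chi_i\rangle=0$ for $i\neq j$ combine to give $\chi_i(a)=\chi_i(b)$ for every $i\neq j$, while $\chi_j(a)=\chi_j(b)$ would make the columns $A$ and $B$ identical, a contradiction. Hence $A$ and $B$ differ in exactly one entry, so $G$ is a $CD1$-group.

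The only mildly subtle point is the uniqueness step in (a), where one must invoke Lemma 2.1 for the pair $(\{1\},c^G)$ to identify the distinguishing row as $k$ before orthogonality can pin down $\chi_k(c)=b_k$; the rest is routine manipulation of the orthogonality relations.
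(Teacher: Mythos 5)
Your proof is correct and follows essentially the same route as the paper: the kernel-of-linear-characters argument for $\{1\}\cup b^G\subseteq G'$, the subtraction of the column orthogonality relations of $C$ against $A$ and $B$ to get the row $(a_k,b_k,0,\dots,0)$, uniqueness via Lemma 2.1, and the inner-product manipulation $\langle\chi_j,\chi_i\rangle=0$ for the converse in (b). Your uniqueness step and the treatment of conjugation/realness are merely spelled out a bit more explicitly than in the paper's own proof, but the substance is identical.
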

\begin{proof}
(a) Since $a_k>1$, $b^G$ lies in the kernel of all linear characters
of $G$. Hence  $b^G$, together with $1$, lies in  $G^{\prime}$
and $|G^{\prime}|\geq 1+|b^G|.$

Moreover, if $d\in G\setminus{(\{1\}\cup b^G)}$, then
$$DA=DB=0 \Rightarrow d_ka_k=d_kb_k.$$
But $b_k\neq a_k$, so $d_k=0$ and hence the $k$-th row of the $CT(G)$
is:
$$(a_k,b_k,0,0,\dots, 0).$$
Therefore, by Lemma 2.1, no class other than $b^G$ can generate together with 
$\{1\}$ a couple of special classes.

(b) By part (a), every $CD1$-group has an
irreducible character which vanishes on all but two conjugacy classes.

Conversely, suppose that the group $G$ has an
irreducible character $\chi $ which vanishes on all but two conjugacy classes: $a^G$ 
and $b^G$ . Then  we may assume, without loss of generality, that $a=1$
and $\chi(a)\neq 0$. By the orthogonality relations for
rows of the $CT(G)$
we have: $\chi(a)+\chi(b)|b^G|=0$ and if $\psi$ is any other 
irreducible character of $G$, then $\chi(a)\psi(a)+\chi(b)\psi(b)|b^G|=0$.
Thus
$\chi(a)\psi(a)=\chi(a)\psi(b)$, which implies that $\psi(a)=\psi(b)$. 
Hence $\chi(a)\neq \chi(b) $ and the columns of $a^G$ and $b^G$ in the $CT(G)$ 
differ by exactly one entry, so
$G$ is a $CD1$-group. 
\end{proof}  
For a $CD1$-group $G$ set
$$N=\bigcap_{i=1}^{k-1} \ker \chi_i.$$
The structure of $N$  will now be determined.
\begin{lemma}  Let $G$ be a $CD1$-group. Then
$$N=\{1\}\cup b^G=\bigcap_{i=1}^{k-1} \ker \chi_i.$$
Thus $N$ is a minimal  normal subgroup of $G$,
the order of $b$ equals a prime $p$ and
$|N|= p^n$
for some positive integer $n$. Hence
$$p^n=|N|=1+|b^G|$$
and 
$$|b^G|=p^n-1\geq p-1.$$ 
In particular, 
$N$ is an elementary abelian
$p$-group, $p\nmid |b^G|$ and 
$$N\leq G^{\prime}.$$
\end{lemma}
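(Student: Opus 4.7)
The plan is to first pin down $N$ as the set-theoretic union $\{1\}\cup b^G$ by exploiting the uniqueness of the non-unit special class from Lemma 2.2(a); after that, all remaining statements fall out of the fact that $N$ is a normal subgroup consisting of exactly two $G$-conjugacy classes.

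By Lemma 2.1 the inclusion $\{1\}\cup b^G\sbs N$ is already in hand. For the reverse inclusion, I would take any $g\in N$. By definition of $N$, we have $\chi_i(g)=\chi_i(1)=a_i$ for $i=1,\dots,k-1$, so the columns in $CT(G)$ corresponding to $g^G$ and to the identity class agree in their first $k-1$ entries. If $g\neq 1$, column orthogonality forbids these two columns from being identical, so they must differ precisely in row $k$. Thus $(\{1\},g^G)$ is a pair of special classes, and Lemma 2.2(a) says the only possible non-unit special class is $b^G$, forcing $g\in b^G$. This gives $N=\{1\}\cup b^G$.

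Next I would read off the structure of $N$. Any normal subgroup of $G$ contained in $N$ is a union of $G$-classes, hence is either $\{1\}$ or $N$, so $N$ is a minimal normal subgroup of $G$. Since every non-identity element of $N$ is conjugate to $b$, all such elements share the order of $b$. This common order is prime: if $p$ were a prime divisor of $|b|$ with $p<|b|$, then $b^{|b|/p}$ would lie in $N\setminus\{1\}$ and have order $p\neq|b|$, a contradiction. So $|b|=p$ and $N$ is a finite $p$-group. Its center $\zent N$ is characteristic in $N$, hence normal in $G$, nontrivial, and contained in $N$, so minimality forces $\zent N=N$, making $N$ abelian; combined with exponent $p$, this makes $N$ elementary abelian of order $p^n$ for some $n\geq 1$. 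Then $p^n=|N|=1+|b^G|$, so $|b^G|=p^n-1\geq p-1$ and $\gcd(p,p^n-1)=1$ gives $p\nmid|b^G|$. The inclusion $N\leq G^{\prime}$ is already recorded in Lemma 2.2(a).

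The only real obstacle is the opening identification of $N$: one must notice that every element of $N$ pairs with $1$ to produce a candidate couple of special classes, and then invoke the uniqueness statement in Lemma 2.2(a) to collapse all of $N$ onto $\{1\}\cup b^G$. Once this is done, the remaining conclusions are standard structural consequences for a minimal normal $p$-subgroup of $G$.
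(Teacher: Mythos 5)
Your proposal is correct and follows essentially the same route as the paper: the identification $N=\{1\}\cup b^G$ via the uniqueness of the non-unit special class from Lemma 2.2(a), and then the standard consequences for a normal subgroup that is the union of two conjugacy classes. You merely spell out details (column orthogonality for the reverse inclusion, the $\zent N$ argument for elementary abelianness) that the paper leaves implicit.
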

\begin{proof}
Since by Lemma 2.2 the class $b^G$ is unique, it follows that
$$N=\{1\}\cup b^G=\bigcap_{i=1}^{k-1} \ker \chi_i.$$
Hence $N$ is a normal subgroup of $G$. Since $N$ is the union
of the two classes $\{1\}$ and $b^G$, it follows that $o(b)=p$ for 
some prime $p$ and $N$ is a minimal  normal subgroup of $G$ of order 
$$|N|=1+|b^G|=p^n\geq p$$
for some positive integer $n$. In particular, $|b^G|=p^n-1\geq p-1$, 
$p\nmid |b^G|$ and $N$ is an elementary abelian
$p$-group. Since by Lemma 2.2 
$$N=\{1\}\cup b^G\subseteq G^{\prime},$$
it follows that $N$ is a (normal) subgroup of $G^{\prime}$.
\end{proof}
The next two lemmas provide important information concerning
$a_k$, $b_k$ and $g$. 
\begin{lemma} Let $G$ be a $CD1$-group. Then
$$ a_k=-b_k|b^G|$$
and
$$g=-a_kb_k(1+|b^G|)=a_k^2\left(\frac {1+|b^G|}{|b^G|}\right)=b_k^2|b^G|(1+|b^G|).$$
Hence $a_k\geq |b^G|\geq p-1$ and $G$ is of even order.
\end{lemma}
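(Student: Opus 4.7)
The plan is to exploit the very sparse shape of the $k$-th row of the character table that was already established in Lemma 2.2: it has only two non-zero entries, namely $a_k$ in the column of $\{1\}$ and $b_k$ in the column of $b^G$. Once this is in hand, both formulas for $a_k$ and for $g$ drop out of the orthogonality relations for rows, and the remaining statements follow from Lemma 2.3.

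First I would apply orthogonality of the row of $\chi_k$ against the row of $\chi_1=1_G$, summed over conjugacy classes:
$$0=\sum_{C\in\mathrm{Cl}(G)}|C|\,\chi_k(x_C)\overline{1_G(x_C)}=1\cdot a_k+|b^G|\cdot b_k,$$
which immediately gives $a_k=-b_k|b^G|$. Next I would apply the norm relation $\sum_C|C|\,|\chi_k(x_C)|^2=g$, which for the same reason collapses to
$$g=a_k^2+b_k^2|b^G|.$$
Substituting $a_k^2=b_k^2|b^G|^2$ into the right-hand side yields $g=b_k^2|b^G|(1+|b^G|)$. The equality $g=-a_kb_k(1+|b^G|)$ is then just $-a_kb_k=b_k^2|b^G|$ multiplied by $1+|b^G|$, and the equality $g=a_k^2(1+|b^G|)/|b^G|$ follows from dividing $a_k^2=b_k^2|b^G|^2$ by $|b^G|$ and again multiplying by $1+|b^G|$.

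Having the identities, the inequality $a_k\geq|b^G|$ is immediate from $a_k=-b_k|b^G|$ together with $b_k\leq-1$ supplied by Lemma 2.1, and $|b^G|\geq p-1$ is exactly part of Lemma 2.3. For the even-order claim I would use Lemma 2.3 once more to write $|b^G|=p^n-1$, so that
$$g=b_k^2\,(p^n-1)\,p^n$$
is divisible by the product of the two consecutive integers $p^n-1$ and $p^n$, hence by $2$.

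There is essentially no obstacle here once Lemma 2.2 is invoked: the entire statement is a direct unwinding of the row-orthogonality relations for a character supported on two classes. The only thing one has to be careful about is bookkeeping, namely not to forget to weight each class by its size $|C|$ when passing from the sum over $G$ to the sum over conjugacy classes.
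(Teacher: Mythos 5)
Your proof is correct and takes essentially the same approach as the paper: both extract $a_k=-b_k|b^G|$ by applying row orthogonality to the sparse $k$-th row supplied by Lemma 2.2, then deduce the formulas for $g$, the inequality $a_k\geq |b^G|\geq p-1$ from $-b_k\geq 1$ and Lemma 2.3, and evenness of $g$ from the consecutive factors $|b^G|$ and $1+|b^G|$. The only cosmetic difference is that you get $g=a_k^2+b_k^2|b^G|$ from the norm relation for $\chi_k$, whereas the paper reuses the identity $g=a_k^2-b_ka_k$ from Lemma 2.1; once $a_k=-b_k|b^G|$ is in hand these are interchangeable.
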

\begin{proof}
Since by Lemma 2.2 the $k$-th row of the $CT(G)$ is $(a_k,b_k,0,0,\dots, 0)$,
it follows by the OR that $a_k\cdot 1+b_k\cdot |b^G|=0$. Hence
$ a_k=-b_k|b^G|$ and by Lemma 2.1 we get:
$$g=a_k^2-b_ka_k=-a_kb_k(1+|b^G|)
=a_k^2\left(\frac {1+|b^G|}{|b^G|}\right)=
b_k^2|b^G|(1+|b^G|).$$
Since $-b_k\geq 1$ by Lemma 2.1 and $|b^G|\geq p-1 $ by Lemma 2.3, 
it follows that $a_k\geq |b^G|\geq p-1$
and $g=b_k^2|b^G|(1+|b^G|)$ implies that $G$ is of even order.
\end{proof}

For the next lemma recall that
$b$ is an element of prime order $p$ and $|b^G|=p^n-1$ for a suitable
positive integer $n$.
\begin{lemma} Let $G$ be a $CD1$-group. Then
$$b_k=-p^t$$
for some non-negative integer $t$, implying that
$$a_k=p^t(p^n-1)\quad \text{and}\quad g=p^{n+2t}(p^n-1).$$
In particular, $-b_k$
is the $p$-part of $a_k$ and the following statements hold: $b_k=-1$
if and only if $p\nmid a_k$ and $b_k=-a_k$ if and only if
$a_k=p^r$ for some positive integer $r$. Moreover,
$b_k=-1$ if and only if $g=p^n(p^n-1)$ and $b_k=-a_k$
if and only if $g=2^{1+2t}$
  
\end{lemma}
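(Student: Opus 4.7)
The plan is to first prove the central assertion $b_k=-p^t$ for some $t\geq 0$, and then to derive every other clause as an arithmetic consequence of this together with Lemmas 2.1--2.4. Given that $b_k$ is already known to be a negative integer (Lemma 2.1), proving $b_k=-p^t$ amounts to showing that no prime $q\ne p$ can divide $b_k$.

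The main step, and the only place where real work is required, is exactly this coprimality claim. Fix a prime $q\ne p$ dividing $|G|$ and let $Q\in\syl{q}{G}$. By Lemma 2.2 the character $\chi_k$ vanishes off $N=\{1\}\cup b^G$, and by Lemma 2.3 the subgroup $N$ is a $p$-group, so $Q\cap N=\{1\}$. Consequently $\chi_k|_Q$ equals $a_k$ at the identity and $0$ on $Q\setminus\{1\}$, so
\[
\chi_k|_Q=\frac{a_k}{|Q|}\,\rho_Q,
\]
where $\rho_Q$ is the regular character of $Q$. Since $\chi_k|_Q$ is a genuine character, $|Q|\mid a_k$; equivalently, $v_q(|G|)\leq v_q(a_k)$. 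Using $|G|=b_k^{2}(p^n-1)p^n$ (Lemma 2.4) and $q\ne p$ gives $v_q(|G|)=2v_q(b_k)+v_q(p^n-1)$, while $a_k=-b_k(p^n-1)$ gives $v_q(a_k)=v_q(b_k)+v_q(p^n-1)$. The inequality $2v_q(b_k)+v_q(p^n-1)\leq v_q(b_k)+v_q(p^n-1)$ forces $v_q(b_k)=0$, establishing $q\nmid b_k$. Since $b_k^2\mid|G|$ via Lemma 2.4, any prime $q\ne p$ not dividing $|G|$ also does not divide $b_k$, so altogether $b_k=-p^t$ for some $t\geq 0$. This is the only nontrivial step; the rest is bookkeeping.

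The formulas $a_k=p^t(p^n-1)$ and $g=p^{n+2t}(p^n-1)$ are now immediate substitutions into Lemma 2.4, and since $\gcd(p^n-1,p)=1$, the $p$-part of $a_k$ is $p^t=-b_k$. From here the equivalences drop out: $b_k=-1\iff t=0\iff p\nmid a_k\iff g=p^n(p^n-1)$; and $b_k=-a_k$ forces $p^t(p^n-1)=p^t$, i.e.\ $p^n=2$, hence $a_k=2^t$ is a prime power (note $t\geq 1$ since $a_k>1$ by Lemma 2.1), and in that case $g=2^{n+2t}(p^n-1)=2^{1+2t}$. Conversely, if $a_k=p^r$ with $r\geq 1$ then $a_k=p^t(p^n-1)$ together with $\gcd(p^n-1,p)=1$ forces $p^n-1=1$, so $p^n=2$ and $b_k=-p^t=-a_k$; and if $g=2^{1+2t}$ then the $p'$-part $p^n-1$ of $g$ must equal $1$, which forces $p=2$, $n=1$, and so $b_k=-a_k$ as above.
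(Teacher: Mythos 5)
Your proof is correct and takes essentially the same route as the paper: the decisive step in both is that $\chi_k$ vanishes off the $p$-group $N$, so its restriction to a Sylow $q$-subgroup $Q$ with $q\neq p$ is a multiple of the regular character, forcing $|Q|\mid a_k$. The only difference is cosmetic bookkeeping --- the paper concludes directly that $g/a_k=-b_kp^n$ must be a power of $p$, whereas you extract $q\nmid b_k$ via $q$-adic valuations; the subsequent equivalences are handled the same way.
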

\begin{proof}
Since by Lemma 2.2 the $k$-th row of the $CT(G)$ is $(a_k,b_k,0,0,\dots, 0)$,
it follows that if $Q$ is a Sylow $q$-subgroup of $G$ for some 
prime $q\neq p$, then $\chi_k$ vanishes on $Q\setminus \{1\}$
and consequently the restriction of $\chi_k$ to $Q$ is a multiple of the regular
character of $Q$ by a positive integer. Thus $|Q|$ divides $a_k=\chi_k(1)$
and it follows that $\frac g{a_k}$ is a power of $p$. Since
$\frac g{a_k}=-b_k(1+|b^G|)=-b_kp^n$, we may conclude that
$$b_k=-p^t$$
for some non-negative integer $t$. Thus 
$$a_k=-b_k|b_G|=p^t(p^n-1),$$
which implies that  $-b_k$
is the $p$-part of $a_k$. Hence 
$b_k=-1$
if and only if $p\nmid a_k$ and $b_k=-a_k$ if and only if
$a_k=p^r$ for some positive integer $r$. Moreover,
$$g=-a_kb_kp^n=p^t(p^n-1)p^tp^n=p^{n+2t}(p^n-1),$$
so if $b_k=-1$ then $g=p^n(p^n-1)$ and if $b_k=-a_k$ then $b_k=b_k(p^n-1)$,
implying that $p^n-1=1$, $p^n=2^1$ and $g=-a_kb_kp^n=2^{2t}2=2^{1+2t}$.
Conversely, if $g=p^n(p^n-1)$, then $2t=0$ and $b_k=-1$ and if 
$g=p^{n+2t}(p^n-1)=2^{1+2t}$, then $n\geq 1$ implies that $p=2$, $p^n=2$ and 
$b_k=-2^t=-a_k$. Hence 
$b_k=-1$ if and only if $g=p^n(p^n-1)$ and $b_k=-a_k$
if and only if $g=2^{1+2t}$.
The proof of Lemma 2.5 is now complete.
\end{proof}
From now on, we shall use the notation $b_k=-p^t$. Recall that a 
$CD1$-group  is called extreme 
if either $b_k=-1$ or $b_k=-a_k$. Therefore a $CD1$-group  is extreme
if either $p\nmid a_k$ or $a_k=p^r$ for some positive integer $r$

Lemma 2.5 implies the following two general results.
\begin{proposition} Let $G$ be a $CD1$-group. Then
the following statements hold:
\begin{enumeratei}
\item $$|C_G(b)|=p^{n+2t},$$  so the centralizer of each non-trivial element of 
$N$ is a 
Sylow $p$-subgroup of $G$. In particular, each non-trivial element of $N$ 
belongs to 
the center of a unique Sylow $p$-subgroup of $G$. 
\item The center of each Sylow $p$-subgroup of $G$ is contained in $N$.
\item If $N$ is not a Sylow $p$-subgroup of $G$, then the Sylow  $p$-subgroups
of $G$ are non-abelian.
\item $$N= \bigcup \{Z(P)\mid P\in Syl_p(G)\}.$$
Moreover, if $P,Q \in Syl_p(G)$ and $P\neq Q$, then $Z(P)\cap Z(Q)=\{1\}$.
\item If $P\in Syl_p(G)$, then $Z(P)=N$ if and only if $P\triangleleft G$.
\item $C_G(N)=O_p(G)$.
\item $O_{p'}(G)=1$.
\item If $x\in G\setminus N$, then $|C_G(x)|=|C_{G/N}(xN)|$. In particular,
$(G,N)$ is a Camina pair.
\item Let $P\in Syl_p(G)$. Then $N_G(Z(P))=N_G(P)$.
\end{enumeratei}
\end{proposition}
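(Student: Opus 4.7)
The plan is to prove the nine parts in an order dictated by their logical dependencies, with (viii) as the pivot that unlocks (ii)--(v). I start with (i): since $b^G=N\setminus\{1\}$ has size $p^n-1$, orbit--stabilizer gives $|C_G(b)|=|G|/|b^G|=p^{n+2t}$, which is the full $p$-part of $|G|$ because $\gcd(p,p^n-1)=1$. Hence $C_G(b)$ is a Sylow $p$-subgroup $P$ with $b\in Z(P)$, and the same holds for every element of $N\setminus\{1\}$ since it is $G$-conjugate to $b$. Uniqueness of the containing Sylow $p$-subgroup follows from order: if $z\in Z(Q)$ for some $Q\in \syl{p}{G}$, then $Q\subseteq C_G(z)$ and equality of orders forces $Q=C_G(z)$.

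Next I would establish (viii). Since $N=\bigcap_{i<k}\ker\chi_i$ we have $N\subseteq\ker\chi_i$ for every $i<k$, while Lemma~2.2 gives the row $(a_k,b_k,0,\dots,0)$ for $\chi_k$, so $\chi_k$ vanishes on $G\setminus N$ and $N\not\subseteq\ker\chi_k$. Thus $\chi_k$ is the only irreducible character of $G$ whose kernel fails to contain $N$, and it vanishes off $N$; this is exactly the standard character-theoretic criterion for $(G,N)$ to be a Camina pair. Consequently $x^G=xN$ for every $x\in G\setminus N$, which immediately yields $|C_G(x)|=|G|/|N|=|C_{G/N}(xN)|$. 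Then (ii) is short: if some $z\in Z(P)\setminus\{1\}$ were outside $N$, the Camina property would force $|C_G(z)|=|G|/|N|=p^{2t}(p^n-1)$, contradicting $P\subseteq C_G(z)$ and $|P|=p^{n+2t}$.

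From here the remaining parts are bookkeeping. Part (iv) combines (i) (every $z\in N\setminus\{1\}$ lies in a unique $Z(P)$) with (ii) ($Z(P)\subseteq N$ always) to give $N=\bigcup_{P\in\syl{p}{G}} Z(P)$, the disjointness $Z(P)\cap Z(Q)=\{1\}$ for $P\neq Q$ being immediate from the uniqueness in (i). Part (iii) is then the remark $Z(P)\subseteq N\subsetneq P$ whenever $N$ is not Sylow. For (vi) I compute $C_G(N)=\bigcap_{x\in G}C_G(b^x)=\bigcap_{x\in G}P^x$, and this intersection of all Sylow $p$-subgroups equals $O_p(G)$ by the standard fact that the largest normal $p$-subgroup is the common intersection of the Sylow $p$-subgroups.

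Part (v) follows from (vi): if $P\triangleleft G$, then $P=O_p(G)=C_G(N)$ centralizes $N$, so $N\subseteq Z(P)$, and with (ii) this forces $N=Z(P)$; conversely, $Z(P)=N\neq\{1\}$ together with (iv) makes every Sylow $p$-subgroup coincide with $P$. For (vii), a normal $p'$-subgroup $M$ satisfies $[M,N]\subseteq M\cap N=\{1\}$, whence $M\subseteq C_G(N)=O_p(G)$, forcing $M=\{1\}$. For (ix), $N_G(P)\subseteq N_G(Z(P))$ is automatic, and $g\in N_G(Z(P))$ gives $Z(P^g)=Z(P)^g=Z(P)\neq\{1\}$, so the disjointness from (iv) forces $P^g=P$. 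The only genuine obstacle I expect is invoking (viii) cleanly via the Camina character-theoretic criterion; once that pivot is in place, every other part reduces to a short order count or commutator argument.
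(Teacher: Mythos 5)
Most of your plan is sound and runs close to the paper's own argument, but the pivot step (viii), as you wrote it, is wrong. From the fact that $\chi_k$ is the unique irreducible character whose kernel does not contain $N$ and that it vanishes off $N$, you may indeed conclude (by the standard character-theoretic criterion) that $(G,N)$ is a Camina pair; but the Camina condition says only that $xN\subseteq x^G$ for $x\in G\setminus N$, not $x^G=xN$. Your "consequently $x^G=xN$, which immediately yields $|C_G(x)|=|G|/|N|=|C_{G/N}(xN)|$" is false in general: $|C_{G/N}(xN)|=|G|/|N|$ would force $xN$ to be central in $G/N$ for every $x\notin N$, i.e.\ $G/N$ abelian, which already fails for the doubly transitive Frobenius group of order $72$ with quaternion complement (there an element $x$ mapping to a non-central element of the complement $Q_8$ has $|C_G(x)|=4$ while $|G|/|N|=8$, and $x^G$ is a union of two cosets of $N$). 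So your derivation of the centralizer equality in (viii) does not stand, and the same false equality is exactly what you invoke to prove (ii).

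The damage is local and easily repaired, essentially by reverting to the paper's computation. Either get the equality directly from the second orthogonality relation: $|C_G(x)|=\sum_{i}|\chi_i(x)|^2$ and $|C_{G/N}(xN)|=\sum_{N\le\ker\chi_i}|\chi_i(x)|^2$, and these agree because the only missing summand is $|\chi_k(x)|^2=0$; or note that $xN\subseteq x^G$ forces $x^G$ to be a union of full $N$-cosets, so $|x^G|=|N|\,|(xN)^{G/N}|$, which again yields $|C_G(x)|=|C_{G/N}(xN)|$. For (ii) you only need the inequality $|C_G(z)|=|C_{G/N}(zN)|\le |G/N|=p^{2t}(p^n-1)<p^{n+2t}$ for $z\notin N$, which survives the correction (the paper obtains the same bound from column orthogonality, $|C_G(c)|\le g-a_k^2$). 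With this fixed, your parts (i) and (iii)--(vii) match the paper's proof closely, and your argument for (ix) via the disjointness of the centers $Z(P)$ is a slightly cleaner variant of the paper's counting of indices.
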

\begin{proof}

(1) Since $g=p^{n+2t}(p^n-1)$, it follows that $|C_G(b)|=\frac g{|b^G|}=\frac g{p^n-1}=p^{n+2t}$
and $C_G(b)$ is a Sylow $p$-subgroup of $G$. Since $N=\{1\}\cup b^G$, it follows
that the centralizer of each non-trivial element of $N$ is a Sylow $p$-subgroup of $G$.
In particular, if $y\in N\setminus\{1\}$, then $y$ belongs to the center of some Sylow 
$p$-subgroup $P$
of $G$. Moreover, this Sylow $p$-subgroup $P$ is unique, since if $y$ belongs to 
the center of 
another Sylow $p$-subgroup $Q$
of $G$, then $|C_G(y)|>|P|=p^{n+2t}$, a contradiction. 

(2) On the other hand, if $c\in G\setminus N$, then by Lemma 2.2
$$|C_G(c)|\leq g-a_k^2=p^{n+2t}(p^n-1)-p^{2t}(p^n-1)^2=p^{2t}(p^n-1)(p^n-p^n+1)
=p^{2t}(p^n-1)$$ 
and $|C_G(c)|<p^{n+2t}$. Hence
the center of each Sylow $p$-subgroup of $G$ is contained in $N$. 

(3) Since $N\triangleleft G$, it follows that $N\subseteq P$ for each $P\in Syl_p(G)$. If
$N\subsetneqq P$ for some $P\in Syl_p(G)$, then $N\subsetneqq P$ for 
all $P\in Syl_p(G)$
and by (2) $Z(P)\subsetneqq P$. Hence
if $N$ is not a Sylow $p$-subgroup of $G$, then the Sylow  $p$-subgroups
of $G$ are non-abelian. 

(4) By (1) and (2)  $N= \bigcup \{Z(P)\mid P\in Syl_p(G)\}$. 
If $P,Q \in Syl_(G)$, $P\neq
Q$ and $x\in Z(P)\cap Z(Q)$, then  $x\in N$ and by (1) $x=1$. Hence
$Z(P)\cap Z(Q)=\{1\}$.

(5) Let $P\in Syl_p(G)$. If $Z(P)=N$, then by (4) $Syl_p(G)=\{P\}$ 
and $P\triangleleft G$.
Conversely, if $P\triangleleft G$ then by (4) $N=Z(P)$.

(6) By (1), $C_G(b)=P$ for some $P\in Syl_p(G)$. Hence
$$C_G(N)=\bigcap_{x\in G}C_G(b^x)=\bigcap_{x\in G}P^x=O_p(G).$$

(7) Since $N$ is a normal $p$-subgroup of $G$, it follows that 
$O_{p'}(G)\leq C_G(N)=O_p(G)$. Hence $O_{p'}(G)=1$.

(8) Let $x\in G\setminus N$. Since $N=\bigcap_{i=1}^{k-1} \ker \chi_i$ 
and $\chi_k(x)=0$, 
it follows that $|C_G(x)|=|C_{G/N}(xN)|$. Thus $(G,N)$ satisfies one of the 
definitions of a Camina pair.  

(9) By (4), if $P,Q \in Syl_p(G)$ and $P\neq Q$, then $Z(P)\cap Z(Q)=\{1\}$.
Hence $[G:N_G(Z(P))]=[G:N_G(P)]$, yielding $|N_G(Z(P))|=|N_G(P)|$. But
$N_G(P)\subseteq N_G(Z(P))$, so $N_G(Z(P))=N_G(P)$.
\end{proof}
Notice that if $G$ is an extreme $CD1$-group, then by Theorem 1 
the Sylow $p$-subgroup
of $G$ is normal in $G$. However, for example, the $CD1$-groups 
of order $54$ mentioned 
in the Introduction are $CD1$-groups which satisfy the condition 
$P\triangleleft G$, but they
are non-extreme ($a_k=6$ and $b_k=-3$). Thus the classification of 
$CD1$-groups with $P\triangleleft G$ is still {\bf an open problem}.  

By Lemma 2.3, if $G$ is a $CD1$-group, then $N\leq G^{\prime}$. In the following 
proposition
we shall classify $CD1$-groups which satisfy the equality $N=G^{\prime}$.
\begin{proposition} The group $G$ is a $CD1$-group with 
$$N=G^{\prime}$$
if and only if one of the following statements holds:

(a) $G$ is an extra-special $2$-group of order $2^{2m+1}$.
The degree pattern of $G$ is $(1^{(2^{2m})},2^m)$.

(b) $G$ is a doubly transitive Frobenius groups of order $(p^n-1)p^n$
with a cyclic complement. 
The degree pattern of $G$ is $(1^{(p^n-1)},p^n-1)$.
\end{proposition}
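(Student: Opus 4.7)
The plan is to reduce Proposition~2.7 to Seitz's classification in [11] (recalled later as Theorem~4.1) of finite groups with a unique non-linear irreducible character. The reformulation is immediate: for a $CD1$-group $G$, the condition $N=G'$ is equivalent to $\chi_k$ being the unique non-linear irreducible character of $G$, since $\chi_1,\dots,\chi_{k-1}$ are precisely those irreducible characters whose kernel contains $N$, and these coincide with the linear characters of $G$ exactly when $N=G'$.

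For the forward direction, Seitz's theorem then yields two possibilities. In the first, $G$ is an extra-special $2$-group of order $2^{2m+1}$; the standard character table of such a group exhibits $2^{2m}$ linear characters together with a unique non-linear character of degree $2^m$, giving case~(a) with $a_k=2^m$. In the second, $G$ is a doubly transitive Frobenius group of order $(p^n-1)p^n$ with kernel $N$ and complement $H$ of order $p^n-1$. Here $H\cong G/N$ must be abelian, and every abelian Frobenius complement is cyclic (its Sylow subgroups are abelian and either cyclic or generalized quaternion, hence all cyclic, and an abelian group with all Sylow subgroups cyclic is itself cyclic), giving case~(b). The degree pattern in (b) follows from Clifford theory: the $p^n-1$ linear characters lift from $G/N\cong H$, while for any non-trivial $\lambda\in\irr{N}$ the inertia subgroup is $N$ (since $H$ acts regularly on the non-trivial characters of $N$), so $\lambda^G$ is irreducible of degree $[G:N]=p^n-1$ and independent of $\lambda$.

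For the reverse direction, I would verify by direct inspection of the (well-known) character tables in each case that the unique non-linear irreducible character $\chi_k$ vanishes off a minimal normal subgroup $N$, equal to $Z(G)$ in case~(a) and to the Frobenius kernel in case~(b); in both situations $N=G'$. Lemma~2.2(b) then yields that $G$ is a $CD1$-group with $N=G'$, and the stated degree patterns are precisely those read off from the character tables.

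The main obstacle is the appeal to Seitz's theorem: without invoking [11] one would essentially have to reproduce Seitz's structural analysis of groups with a single non-linear irreducible character, which is the central content of that paper. Since [11] is already cited in the introduction, the cleanest strategy is to phrase Proposition~2.7 as the $CD1$-translation of Seitz's result and quote it directly.
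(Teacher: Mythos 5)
Your proposal is correct and follows essentially the same route as the paper: the forward direction reduces to Seitz's theorem by showing that $N=G'$ forces $G$ to have exactly one non-linear irreducible character (you argue this via kernel containment and $N\leq G'$, while the paper counts the $[G:G']=p^{2t}(p^n-1)$ linear characters and checks the degree sum), and the converse verifies that both families are $CD1$-groups with $N=G'$ (you do this by direct inspection of the character tables together with Lemma~2.2(b), while the paper cites its Propositions~3.2, 3.3 and 2.6(5)). These are only minor variations in bookkeeping, so no substantive difference.
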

\begin{proof} Suppose, first, that $G$ is a $CD1$-group with
$N=G^{\prime}$. Then $|G^{\prime}|=p^n$ and by Lemma 2.5
$$[G:G^{\prime}]=\frac {p^{n+2t}(p^n-1)}{p^n}=p^{2t}(p^n-1).$$
Therefore $G$ has $p^{2t}(p^n-1)$ linear characters and
$$a_k^2+p^{2t}(p^n-1)=p^{2t}(p^n-1)^2+p^{2t}(p^n-1)=p^{2t}(p^n-1)(p^n-1+1)=p^{n+2t}(p^n-1)=g.$$
Thus  $G$ has only one non-linear irreducible character and by Seitz's theorem 
(see Theorem 4.1 in Section 4)
$G$ satisfies either (a) or (b). 

Conversely, suppose that $G$ is of type (a) or (b). Then, as shown in the proof of 
Proposition 2 in Section 4, $G$ is a $CD1$-group. Moreover, if $G$ is of type (a),
then $G$ is the Sylow $p$-subgroup of itself and by Proposition 2.6.(5) 
$N=Z(G)$. But in 
extra-special groups $G^{\prime}=Z(G)$, so $G^{\prime}=N$, as required. Finally, if 
$G$ is of type (b), then $F=G^{\prime}$ and it follows by Proposition 3.3 that
$N= F=G^{\prime}$, again as required.  The proof of 
Proposition 2.7 is now complete.
\end{proof} 

We continue with information concerning the center $Z(G)$ of a $CD1$-group $G$.
\begin{lemma} Let $G$ be a $CD1$-group. Then
$$Z(G)\leq N$$
and either
$$Z(G)=1,\ |b^G|\geq 2,\ -b_k\leq \frac {a_k}2\ \text{and}\ g=
a_k^2(\frac {1+|b^G|}{|b^G|})\leq \frac 32 a_k^2,$$
or
$$|Z(G)|=2,\ Z(G)=\{1,b\},\ |b^G|=1,\ p=2,\ n=1,\ b_k=-a_k\ \text{and}\ g=2a_k^2.$$
\end{lemma}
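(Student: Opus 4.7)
The plan is to first establish $Z(G)\leq N$, and then split into two cases based on whether $Z(G)$ is trivial or not; in each case the listed numerical data will follow from Lemmas 2.1--2.5 almost immediately.

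For the containment $Z(G)\leq N$, I would use Lemma 2.2, which says the $k$-th row of the character table is $(a_k,b_k,0,0,\dots,0)$, so $\chi_k$ vanishes on every class outside $\{1\}\cup b^G=N$. On the other hand, for a central element $z\in Z(G)$ we have $\chi_k(z)=\lambda(z)\chi_k(1)$ for some root of unity $\lambda(z)$, hence $|\chi_k(z)|=a_k\neq 0$. Combining the two, $z$ must lie in $N$.

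Next I would analyse the intersection $Z(G)\cap b^G$. Every element of $Z(G)$ forms a one-element conjugacy class; but $b^G$ is a single conjugacy class, so as soon as $Z(G)\cap b^G$ is non-empty, it forces $|b^G|=1$ and $b\in Z(G)$. This gives the dichotomy. If $Z(G)\neq 1$, then $Z(G)=\{1,b\}$ has order $2$, and Lemma 2.3 yields $p^n-1=|b^G|=1$, so $p=2$, $n=1$ and $|N|=2$; then Lemma 2.5 forces $a_k=2^t=-b_k$, i.e.\ $b_k=-a_k$, and $g=a_k^2-b_ka_k=2a_k^2$, which is exactly the second alternative.

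If instead $Z(G)=1$, then since $b\notin Z(G)$ we must have $|b^G|\geq 2$. The identity $a_k=-b_k|b^G|$ of Lemma 2.4 immediately gives $-b_k=a_k/|b^G|\leq a_k/2$, and the identity $g=a_k^2\bigl((1+|b^G|)/|b^G|\bigr)$ together with $|b^G|\geq 2$ gives $g\leq \tfrac32 a_k^2$. This yields the first alternative and completes the proof. There is no real obstacle here: the only mildly delicate point is the very first step, showing that a non-trivial central element must belong to $N$, and this is handled transparently by the shape of the $k$-th row recorded in Lemma 2.2.
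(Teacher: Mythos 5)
Your proof is correct and follows essentially the same route as the paper: $Z(G)\leq N$ via the vanishing pattern of the $k$-th row (Lemma 2.2) and $|\chi_k(z)|=a_k$ for central $z$, then the dichotomy according to whether $b$ is central, with the numerical conclusions read off from Lemmas 2.3--2.5. The only difference is cosmetic (you invoke Lemma 2.5 and $g=a_k^2-b_ka_k$ where the paper uses Lemma 2.4's formulas), and your explicit remark that a central element of $b^G$ forces $|b^G|=1$ is a slightly more careful rendering of a step the paper leaves implicit.
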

\begin{proof}
If $c\in Z(G)$, then $|\chi_k(c)|=a_k$. Since  by Lemma 2.2 the $k$-th row of the 
$CT(G)$ is $(a_k,b_k,0,0,\dots, 0)$, it follows that $c\in \{1\}\cup b^G=N$.
Hence $Z(G)\leq N$.  

If $b\in Z(G)$, then $|b^G|=1$, $Z(G)=\{1,b\}=N$, $|Z(G)|=2=p^n$, 
$a_k=-b_k|b^G|=-b_k$
and  $g=a_k^2(\frac {1+|b^G|}{|b^G|})=2a_k^2$.

If $b\notin Z(G)$, then $Z(G)=1$, $|b^G|\geq 2$, $-b_k=a_k/|b^G|\leq a_k/2$
and $g=a_k^2(\frac {1+|b^G|}{|b^G|})\leq (3/2)a_k^2.$
\end{proof} 

Lemma 2.8 immediately implies the following characterization of  $CD1$-groups $G$
with $|Z(G)|=2$.

\begin{lemma}
Let $G$ be a $CD1$-group. Then $|Z(G)|=2$ if and only if $b_k=-a_k$.
\end{lemma}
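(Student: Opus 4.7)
The plan is to obtain Lemma 2.9 as an almost immediate corollary of the dichotomy already established in Lemma 2.8, so the bulk of the work is already done. The statement is a biconditional, and Lemma 2.8 provides exactly two mutually exclusive possibilities for a $CD1$-group: either $Z(G)=1$, in which case $-b_k\leq a_k/2$, or $|Z(G)|=2$, in which case $b_k=-a_k$. Both directions of Lemma 2.9 will follow by matching a hypothesis to the correct branch of this dichotomy.

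For the forward direction, I would simply quote Lemma 2.8: if $|Z(G)|=2$, that lemma tells us directly that $b_k=-a_k$, with no further argument required.

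For the converse, suppose $b_k=-a_k$. The point is to exclude the branch $Z(G)=1$ of Lemma 2.8. In that branch we would have $-b_k\leq a_k/2$, i.e.\ $a_k\leq a_k/2$, forcing $a_k\leq 0$. But by Lemma 2.1 we know $a_k\geq a_{k-1}+1\geq 2>0$, a contradiction. Hence the first branch of Lemma 2.8 is impossible, so we must be in the second branch, giving $|Z(G)|=2$.

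The main (and only) subtlety will be making sure that the inequality $a_k>0$ (equivalently $a_k\geq 2$) is clearly invoked from Lemma 2.1 to rule out the trivial-center branch. No calculation, no character-theoretic input beyond Lemma 2.8, and no case analysis on $p$ or $n$ is needed, so I do not anticipate any real obstacle; the entire proof should be a short paragraph.
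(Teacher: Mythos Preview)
Your proposal is correct and matches the paper's own approach: the paper states Lemma 2.9 as an immediate consequence of Lemma 2.8 without giving any further argument, and your brief deduction from the dichotomy in Lemma 2.8 is exactly how that immediate implication is unpacked.
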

Finally, we shall consider $CD1$-groups satisfyng $g=\frac 32a_k^2$.
\begin{lemma} 
Let $G$ be a $CD1$-group satisfying $g=\frac 32a_k^2$. Then
$|N|=p=3$, $b_k=-3^t$, $a_k=2\cdot 3^t$ and $g=2\cdot 3^{2t+1}$.
In particular, such $G$ are $p$-closed.
\end{lemma}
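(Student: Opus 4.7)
The plan is to read off $p$ and $n$ from the identities in Lemma 2.4 and then apply Lemma 2.5. By Lemma 2.4 we have
$$g = a_k^2\cdot\frac{1+|b^G|}{|b^G|},$$
and by Lemma 2.3 we have $|b^G| = p^n - 1$. Substituting the hypothesis $g = \tfrac{3}{2} a_k^2$ yields
$$\frac{p^n}{p^n-1} = \frac{3}{2},$$
so $2p^n = 3(p^n-1)$, which forces $p^n = 3$. Hence $p = 3$, $n = 1$, and $|N| = 3$.

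Next I would feed these values into Lemma 2.5. Since $b_k = -p^t = -3^t$, the formulas $a_k = -b_k|b^G| = 3^t(p^n-1)$ and $g = p^{n+2t}(p^n-1)$ immediately give $a_k = 2\cdot 3^t$ and $g = 3^{1+2t}\cdot 2 = 2\cdot 3^{2t+1}$, which is exactly the numerical conclusion of the lemma.

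For the final assertion that $G$ is $p$-closed, the point is simply that $|G| = 2\cdot 3^{2t+1}$, so a Sylow $3$-subgroup $P$ of $G$ has order $3^{2t+1}$ and index $2$ in $G$; any subgroup of index $2$ is normal, hence $P \trianglelefteq G$, i.e.\ $G$ is $3$-closed. There is no real obstacle here: the whole argument is a direct calculation combining Lemmas 2.3, 2.4 and 2.5 with the elementary observation about subgroups of index $2$.
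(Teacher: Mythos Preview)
Your proof is correct and follows essentially the same route as the paper: from the identity $g=a_k^2\,\frac{1+|b^G|}{|b^G|}$ (the paper cites Lemma~2.8, you cite Lemma~2.4 directly) the hypothesis $g=\tfrac32 a_k^2$ forces $|b^G|=2$, hence $p^n=3$; Lemma~2.5 then yields the stated values of $b_k$, $a_k$ and $g$, and the index-$2$ Sylow $3$-subgroup is automatically normal.
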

\begin{proof}
By Lemma 2.8, $g=\frac 32a_k^2$ implies that $|b^G|=p^n-1=2$ and $a_k=p^t(p^n-1)=2p^t$.
Hence $|N|=p=3$, $a_k=2\cdot 3^t$, $b_k=-3^t$ and  $g=6p^{2t}=2\cdot 3^{2t+1}$. 
Since  the index of the
Sylow $3$-subgroup in $G$ equals $2$, it follows that $G$ is $p$-closed.
\end{proof}
Notice that if $t=1$, then $g=2\cdot 27=54$ and we get the groups of order $54$
mentioned in the Introduction. 

\section{The basic result and applications}
 
Our main aim in this  section is to prove the following theorem.
\begin{thm1}
A group $G$ is an {\it extreme $CD1$-group} if and only if one
of the following holds:
\begin{enumeratei}
\item $-b_k= 1$ and $G$ is a $2$-group of central type with $|Z(G)|=2$;
\item $-b_k=a_k$ and $G$ is a doubly transitive Frobenius group. 
\end{enumeratei}
\end{thm1}
We shall use the notation of the previous section.
Recall that a $CD1$-group is called {\it extreme} if either $b_k=-1$ or $b_k=-a_k$.
Hence the theorem is a consequence of the following two propositions:
\begin{proposition}
The following statements are equivalent:
\begin{enumeratei}
\item $G$ is a  $CD1$-group with $b_k=-1$,
\item $G$ is a  $CD1$-group with $p\nmid a_k$,
\item $G$ is a doubly transitive Frobenius group  with the Frobenius
kernel $F$,
\end{enumeratei}
where $\{1\}\cup b^G=F$.
\end{proposition}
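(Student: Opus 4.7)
The plan is to prove this as a cycle of implications, with the equivalence $(1) \Leftrightarrow (2)$ being essentially free. Indeed, Lemma 2.5 directly states that $b_k = -1$ if and only if $p \nmid a_k$, so once we establish one of them, the other is immediate. The real content is $(1) \Leftrightarrow (3)$, which I would prove by showing $(1) \Rightarrow (3)$ using the centralizer computation of Proposition 2.6, and $(3) \Rightarrow (1)$ by constructing the vanishing character directly from Frobenius/Clifford theory.

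For $(1) \Rightarrow (3)$: assume $G$ is a $CD1$-group with $b_k = -1$. Lemma 2.5 gives $t = 0$, so $a_k = p^n - 1$, $g = p^n(p^n - 1)$ and $|N| = p^n$. Since $[G:N] = p^n - 1$ is coprime to $p$, Schur--Zassenhaus furnishes a complement $H$ of $N$ in $G$ with $|H| = p^n - 1 = |N| - 1$. Now Proposition 2.6(1) with $t = 0$ gives $|C_G(b)| = p^n = |N|$, and since $N$ is abelian we must have $C_G(b) = N$; conjugating, $C_G(y) = N$ for every $y \in N \setminus \{1\}$. This means no non-identity element of $H$ centralizes a non-identity element of $N$, i.e.\ $H \cap H^y = 1$ for all $y \in N \setminus \{1\}$, so $G = NH$ is a Frobenius group with kernel $N$ and complement $H$. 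The equality $|H| = |N| - 1$ is precisely the definition of doubly transitive, and by construction $F = N = \{1\} \cup b^G$.

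For $(3) \Rightarrow (1)$: assume $G$ is a doubly transitive Frobenius group with kernel $F$ of order $p^n$ and complement of order $p^n - 1$. Since $F$ is elementary abelian and $H$ acts transitively on $F \setminus \{1\}$ (by double transitivity), all non-trivial linear characters of $F$ form a single $H$-orbit, so by Clifford theory inducing any one of them yields an irreducible character $\chi$ of $G$ of degree $[G:F] = p^n - 1$. Because $\chi$ is induced from $F$, it vanishes on $G \setminus F$, and because $F \setminus \{1\}$ is a single conjugacy class of $G$, $\chi$ is constant on it; the row orthogonality relation $\chi(1) + (p^n - 1)\chi(b) = 0$ forces $\chi(b) = -1$. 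Thus $\chi$ vanishes on all classes except $\{1\}$ and $b^G = F \setminus \{1\}$, so by Lemma 2.2(b) $G$ is a $CD1$-group with $b_k = -1$ and $\{1\} \cup b^G = F$.

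The only slightly subtle point is the $(1) \Rightarrow (3)$ direction, where one must recognise that the centralizer identity $C_G(b) = N$ is exactly what upgrades the decomposition $G = NH$ into a Frobenius decomposition; everything else is direct bookkeeping from Lemma 2.5 and Proposition 2.6. No deep new ideas are needed beyond the lemmas already established in Section 2.
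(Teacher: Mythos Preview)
Your proof is correct and follows essentially the same approach as the paper's: both deduce $(1)\Leftrightarrow(2)$ from Lemma~2.5, both prove $(1)\Rightarrow(3)$ by showing $C_G(x)=N$ for every nontrivial $x\in N$ (the paper then cites Feit's Frobenius criterion directly rather than first producing a complement via Schur--Zassenhaus), and for $(3)\Rightarrow(1)$ the paper invokes Proposition~3.3 (which quotes Huppert, Theorem~18.7), which is precisely the Frobenius/Clifford argument you spell out by hand together with Lemma~2.2(b). The differences are purely in packaging, not in substance.
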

and
\begin{proposition}
The following statements are equivalent:
\begin{enumeratei}
\item $G$ is a  $CD1$-group with  $b_k=-a_k$,
\item $G$ is a  $CD1$-group with $a_k=p^s$ for some positive integer $s$,
\item $G$ is a $2$-group of central type with $|Z(G)|=2$,
\end{enumeratei}
with $\{1\}\cup b^G=Z(G)$.
\end{proposition}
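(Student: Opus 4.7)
The plan is to observe first that the equivalence (1) $\iff$ (2) is immediate from Lemma 2.5, which already records that $b_k=-a_k$ iff $a_k=p^r$ for some positive integer $r$. So the real work is in the equivalence of (1) and (3), and this I would prove in two steps.

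For (1) $\Rightarrow$ (3): assume $b_k=-a_k$. Lemma 2.9 gives $|Z(G)|=2$, and then Lemma 2.8 tells me that $Z(G)=\{1,b\}=N$, so $|N|=p^n=2$, forcing $p=2$ and $n=1$. Lemma 2.5 then yields $g=2^{1+2t}$ and $a_k=2^t$, so $G$ is a $2$-group. It remains to exhibit an irreducible character realizing the central-type condition; the natural candidate is $\chi_k$, and indeed
\[
[G:Z(G)]=\frac{2^{1+2t}}{2}=2^{2t}=a_k^2=\chi_k(1)^2,
\]
so $G$ is of central type. This part is essentially bookkeeping once Lemmas 2.5, 2.8 and 2.9 are available.

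For (3) $\Rightarrow$ (1): assume $G$ is a $2$-group of central type with $|Z(G)|=2$, and write $Z(G)=\{1,z\}$. Pick $\chi\in\irr G$ with $\chi(1)^2=[G:Z(G)]$. Here I would invoke the classical fact that such a $\chi$ vanishes on $G\setminus Z(G)$: since $Z(G)$ acts on the representation via a linear character, one has $|\chi(c)|=\chi(1)$ for each $c\in Z(G)$, and therefore
\[
\sum_{c\in Z(G)}|\chi(c)|^2=|Z(G)|\,\chi(1)^2=|Z(G)|\,[G:Z(G)]=|G|,
\]
which by the first orthogonality relation for $\chi$ leaves no room for nonzero values outside $Z(G)$. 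Hence $\chi$ vanishes on all conjugacy classes except $\{1\}$ and $\{z\}$, and Lemma 2.2(b) concludes that $G$ is a $CD1$-group. Since $z\in Z(G)\setminus\{1\}$ and $z$ is conjugate only to itself, the non-unit special class is $\{z\}$, giving $b=z$, $|b^G|=1$, and $Z(G)=\{1\}\cup b^G$. Finally Lemma 2.4 reads $a_k=-b_k|b^G|=-b_k$, i.e.\ $b_k=-a_k$. The identification $\{1\}\cup b^G=Z(G)$ that appears in the statement is a by-product of this argument (and is also forced by Lemma 2.8 once $|Z(G)|=2$).

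The step I expect to carry the most weight is the $(3)\Rightarrow(1)$ direction, specifically the invocation of the standard vanishing property of the distinguished character of a central-type group. The rest of the proposition is a straightforward assembly of the structural information collected in Section 2, most crucially Lemmas 2.5, 2.8 and 2.9.
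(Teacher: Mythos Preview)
Your proof is correct. The equivalence $(1)\Leftrightarrow(2)$ and the implication $(1)\Rightarrow(3)$ match the paper's argument essentially verbatim (the paper cites Lemma~2.8 directly rather than going through Lemma~2.9, but the content is identical).

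The direction $(3)\Rightarrow(1)$ is where you diverge. The paper argues via the \emph{columns}: from $|G|=2\chi(1)^2$ it deduces $\chi(1)^2=\sum_{\xi\neq\chi}\xi(1)^2$, uses that every $\psi(b)=\pm\psi(1)$ since $b$ is a central involution, and then combines column orthogonality $\sum_\psi\psi(1)\psi(b)=0$ with a short inequality to force $\chi(b)=-\chi(1)$ and $\xi(b)=\xi(1)$ for all $\xi\neq\chi$, thereby exhibiting the two columns that differ in one entry directly. You instead argue via the \emph{row} of $\chi$: the standard computation $\sum_{c\in Z(G)}|\chi(c)|^2=|Z(G)|\chi(1)^2=|G|$ saturates the norm $\langle\chi,\chi\rangle=1$, so $\chi$ vanishes off $Z(G)$, and then Lemma~2.2(b) hands you the $CD1$-structure with $b=z$. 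Your route is shorter and invokes the well-known vanishing property of the distinguished character of a central-type group, whereas the paper's route is self-contained and avoids appealing to Lemma~2.2(b). One small point worth making explicit in your write-up: once Lemma~2.2(b) is applied, the character vanishing on all but two classes is necessarily $\chi_k$ by Lemma~2.2(a), so $\chi=\chi_k$ and the invocation of Lemma~2.4 is justified.
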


We first prove the following preliminary
proposition.
\begin{proposition}
Let $G$ be a doubly transitive Frobenius group of order $(p^n-1)p^n$.
Then $G$ is a $CD1$-group with $a_k=p^n-1=|b^G|$, 
$F=N$ and $b_k=-1$.
\end{proposition}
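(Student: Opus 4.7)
The plan is to construct the irreducible character of $G$ of degree $p^n-1$ that vanishes off $F$, then invoke Lemma 2.2(b). Write $G=F\rtimes H$ with $|F|=p^n$ and $|H|=p^n-1$. Because $G$ is Frobenius, the conjugation action of $H$ on $F$ is semiregular on $F\setminus\{1\}$, and the hypothesis $|H|=|F|-1$ forces this action to be regular with a single orbit. In particular, $F\setminus\{1\}$ is a single $G$-conjugacy class of size $p^n-1$.

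Next I count conjugacy classes and irreducible characters of $G$. The conjugacy classes consist of $\{1\}$, the class $F\setminus\{1\}$, and one class for each non-identity conjugacy class of $H$ (a standard Frobenius-group fact), giving $1+|\irr H|$ classes in total. On the character side, the irreducible characters of $G$ containing $F$ in their kernel are the lifts of $\irr H$, contributing $|\irr H|$ of them, while the remaining irreducibles correspond via Clifford theory to $H$-orbits on $\irr F\setminus\{1_F\}$. Matching the two counts yields a unique such orbit, which therefore has size $p^n-1=|H|$, so every nontrivial $\lambda\in\irr F$ has trivial stabilizer in $H$ and $\chi:=\lambda^G$ is irreducible of degree $[G:F]=p^n-1$. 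Since every other irreducible character of $G$ has degree at most $\sqrt{|H|}<p^n-1$, the character $\chi$ has the largest degree and I may take $\chi_k=\chi$ and $a_k=p^n-1$.

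It remains to compute $\chi_k$ on each class and invoke Lemma 2.2(b). For $x\in G\setminus F$ no $G$-conjugate of $x$ lies in $F$, so the induced-character formula gives $\chi_k(x)=0$. For $x\in F\setminus\{1\}$, regularity of the $H$-action on $F\setminus\{1\}$ yields
\[
\chi_k(x)=\sum_{h\in H}\lambda(h^{-1}xh)=\sum_{y\in F\setminus\{1\}}\lambda(y)=-1,
\]
using that $\sum_{y\in F}\lambda(y)=0$ for nontrivial $\lambda$. Thus $\chi_k$ vanishes on every class except $\{1\}$ and $F\setminus\{1\}$, and Lemma 2.2(b) shows that $G$ is a $CD1$-group with $b^G=F\setminus\{1\}$, $N=\{1\}\cup b^G=F$, $|b^G|=p^n-1=a_k$, and $b_k=\chi_k(b)=-1$, as claimed. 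The only delicate step is justifying the unique $H$-orbit on $\irr F\setminus\{1_F\}$, which is handled by the class/character count above; everything else is a direct induced-character computation.
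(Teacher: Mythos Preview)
Your proof is correct and complete. The paper, however, takes a much shorter route: it simply cites Theorem~18.7 of Huppert's \emph{Character theory of finite groups} to assert that in a Frobenius group of order $(p^n-1)p^n$ all irreducible characters except a single one of degree $p^n-1$ contain $F$ in their kernel; from this it reads off directly that $\bigcap_{i<k}\ker\chi_i\supseteq F\neq 1$, hence $G$ is $CD1$ with $N=F$ and $a_k=p^n-1$, and then invokes Lemma~2.5 (since $p\nmid a_k$) to obtain $b_k=-1$. By contrast, you reconstruct the relevant piece of Frobenius-group character theory from scratch via a class/character count, explicitly realise $\chi_k$ as $\lambda^G$, and compute $\chi_k(b)=-1$ by hand from the induced-character formula, then conclude via Lemma~2.2(b) rather than Lemma~2.5. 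Your argument is thus more self-contained and avoids the external reference entirely, at the cost of some length; the paper's version is terser but leans on Huppert and on Lemma~2.5. Both approaches are sound, and your direct evaluation of $\chi_k$ on $F\setminus\{1\}$ is a nice touch that makes the value $b_k=-1$ transparent rather than deduced from divisibility.
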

\begin{proof} Since $G$ is a Frobenius group of order $(p^n-1)p^n$ with the
Frobenius
kernel $F$ of order $p^n$, it follows  by Theorem 18.7 in [8] that 
all the irreducible characters of $G$, except one of degree $p^n-1$,
contain $F$ in their kernel. Hence $G$ is a $DC1$-group with $N=F$, $a_k=p^n-1$,  
$|b^G|=p^n-1=a_k$ and by Lemma 2.5 $b_k=-1$. The proof 
of the proposition is complete.
\end{proof}

We proceed now with proofs of the main propositions.
\begin{proof}[Proof of Proposition 3.1]
(1) and (2) are equivalent by Lemma 2.5. We proceed with proving
that (1) and (3) are equivalent.

Suppose, first, that $G$ is a  $CD1$-group with  $b_k=-1$. 
Then by Lemmas 2.3 and 2.5
$g=(p^n-1)p^n$
and $N$ is a minimal normal subgroup of $G$ of order $p^n$.
Moreover, if $x\in N\setminus\{1\}=b^G$, then $|C_G(x)|=\frac {|G|}{|b^G|}=p^n=|N|$
and it follows that $N=C_G(x)$ for each $x\in N\setminus\{1\}$. Thus $G$
is a Frobenius group with $N$ as its kernel, as required (see Feit's
book [5], p.133).

Conversely, suppose that $G$ is a doubly transitive Frobenius group of 
order $(p^n-1)p^n$. Then by Proposition 3.3 
$G$ is a $CD1$-group with $b_k=-1$ and $F=N$, as required.
\end{proof}

\begin{proof}[Proof of Proposition 3.2]
(1) and (2) are equivalent by Lemma 2.5. We proceed with proving
that (1) and (3) are equivalent.

Suppose, first, that $G$ is a  $CD1$-group with $b_k=-a_k$. 
Then by Lemma 2.8
$|Z(G)|=2$ and $g=2a_k^2$. 
Hence 
$$[G:Z(G)]=g/2=a_k^2,$$
so $G$ is a group of central type. Finally, by Lemma 2.5, 
$G$ is a $2$-group, as required.

Conversely, suppose that $G$ is a $2$-group of central type with $|Z(G)|=2$.
Then $[G:Z(G)]=\chi(1)^2$ for some $\chi\in \irr G$ and $|G|=2\chi(1)^2$. Since
$|G|=\sum_{\psi\in \irr G} \psi(1)^2$, it follows that
$$\chi(1)^2=\sum_{\xi\in \irr G\setminus \chi} \xi(1)^2.$$ 

Let $Z(G)=\{1,b\}$. Then $|b|=2$
and for each $\psi\in \irr G$,  $\psi(b)$ is an  integer and $|\psi(b)|=\psi(1)$. 
Hence  $\psi(b)=\pm \psi(1)$.

Since $\sum_{\psi\in  \irr G}\psi(1)\psi(b)=0$, it follows that
$$\chi(1)\chi(b)=-\sum_{\xi\in \irr G\setminus \chi}\xi(1)\xi(b).$$

If $\chi(b)=\chi(1)$, then 
$$\chi(1)^2=-\sum_{\xi\in \irr G\setminus \{\chi\cup 1\}} \xi(1)\xi(b)-1\leq 
\sum_{\xi\in \irr G\setminus \{\chi\cup 1\}} \xi(1)^2-1=
\sum_{\xi\in \irr G\setminus \chi}\xi(1)^2-2,$$
a contradiction.  
So $\chi(b)=-\chi(1)$ and 
$$\chi(1)^2=\sum_{\xi\in \irr G\setminus \chi}\xi(1)\xi(b)=\sum_{\xi\in \irr G\setminus 
\chi} \xi(1)^2.$$
Hence
$\xi(b)=\xi(1)$ for all $\xi\in \irr G\setminus \chi$ and $G$ is a $CD1$-group
with $\chi=\chi_k$, $Z(G)=\{1\}\cup b^G$ and  $b_k=\chi(b)=-\chi(1)=-a_k$, as required.
\end{proof}

As mentioned above, Theorem 1 follows from Propositions 3.1 and 3.2.

We conclude this section with three applications of Propositions 3.1 and 3.2. 

By  Proposition 3.3 doubly transitive Frobenius groups  are  $CD1$-groups with 
$a_k=p^n-1$. Therefore Proposition 3.1  immediately
yields 
\begin{corollary}
$G$ is a  $CD1$-group with $a_k<p$ if and only if 
$a_k=p-1$ and $G$ is a doubly transitive Frobenius group of order $(p-1)p$.
\end{corollary}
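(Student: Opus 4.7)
The plan is to use the already-established numerical constraints on $a_k$ in terms of $p,n,t$ (from Lemmas 2.4 and 2.5) to pin down $n=1$ and $t=0$, and then invoke Proposition 3.1.

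First, assume $G$ is a $CD1$-group with $a_k<p$. By Lemma 2.4 we have the lower bound
\[
a_k \;\geq\; |b^G| \;=\; p^n-1 \;\geq\; p-1,
\]
so combining with $a_k<p$ forces $a_k=p-1$ and consequently $|b^G|=p^n-1=p-1$, i.e.\ $n=1$. Next, by Lemma 2.5 we have $a_k=p^t(p^n-1)$ for some non-negative integer $t$; with $n=1$ this reads $p-1=p^t(p-1)$, so $t=0$ and hence $b_k=-p^t=-1$ (equivalently, $p\nmid a_k$).

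Now apply Proposition 3.1: a $CD1$-group with $b_k=-1$ is exactly a doubly transitive Frobenius group, and by Lemma 2.5 its order is $p^{n+2t}(p^n-1)=p(p-1)$. This gives the forward direction.

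For the converse, if $G$ is a doubly transitive Frobenius group of order $(p-1)p$, then Proposition 3.3 (applied with $n=1$) says that $G$ is a $CD1$-group with $a_k=p-1<p$. I do not expect any real obstacle here; the entire argument is a direct reading of the inequality chain $p-1\leq |b^G|\leq a_k<p$ together with the divisibility structure $a_k=p^t(p^n-1)$, followed by a citation of Proposition 3.1 in one direction and Proposition 3.3 in the other.
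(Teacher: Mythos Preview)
Your proof is correct and uses essentially the same ingredients as the paper. The paper's argument is terser: it simply observes that $a_k<p$ forces $p\nmid a_k$, so Proposition 3.1 applies directly to give a doubly transitive Frobenius group with $a_k=p^n-1$, and then $a_k<p$ forces $n=1$; you instead first pin down $n=1$ via the inequality chain from Lemma 2.4 and then read off $t=0$ from the formula in Lemma 2.5 before invoking Proposition 3.1, but the logical content is the same.
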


If the $CD1$-group $G$ is an $r$-group for some prime $r$, then $p=r$ and 
$a_k=r^s=p^s$ for some positive integer $s$. Therefore Proposition 3.2
immediately yields
\begin{corollary}
Let $G$ be an $r$-group for some prime $r$. Then $G$ is a  $CD1$-group if
and only if $r=2$  and $G$ is a $2$-group of central type with $|Z(G)|=2$.
\end{corollary}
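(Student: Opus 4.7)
The plan is to derive Corollary 3.5 directly from Proposition 3.2, so the bulk of the work is simply to verify that the hypothesis that $G$ is an $r$-group forces us into case (2) of that proposition.

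For the forward direction, suppose $G$ is an $r$-group and a $CD1$-group. First I would invoke Lemma 2.3 to recall that $N$ is a non-trivial elementary abelian $p$-group for some prime $p$, and the element $b$ has order $p$. Since $N \leq G$ and $|G|$ is a power of $r$, Lagrange forces $p = r$. Next, $a_k = \chi_k(1)$ divides $|G| = r^m$, so $a_k = r^s = p^s$ for some $s \geq 1$ (recall from Lemma 2.1 that $a_k > 1$). This puts $G$ squarely in case (2) of Proposition 3.2, and the implication (2) $\Rightarrow$ (3) of that proposition gives that $G$ is a $2$-group of central type with $|Z(G)| = 2$; in particular $r = 2$.

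The reverse direction is immediate: if $r = 2$ and $G$ is a $2$-group of central type with $|Z(G)| = 2$, then (3) $\Rightarrow$ (1) of Proposition 3.2 yields that $G$ is a $CD1$-group.

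There is no real obstacle; the only point that deserves a line of care is the identification of the prime $p$ appearing in Lemma 2.3 with the prime $r$ of the hypothesis, which is needed before one can convert $a_k$ being a power of $r$ into $a_k$ being a power of $p$ and apply Proposition 3.2.
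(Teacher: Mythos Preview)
Your proof is correct and follows essentially the same approach as the paper: identify $p=r$ via Lagrange, observe that $a_k$ is a power of $p$, and then invoke Proposition~3.2 for both directions. The paper's version is terser but the logic is identical.
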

The final result is also an application of Propositions 3.2. 
\begin{thm3} A group $G$ is a $CD1$-group with $Z(G)\neq 1$ if and only if
it is  a $2$-group of central type with $|Z(G)|=2$.
\end{thm3}
\begin{proof} Suppose, first, that $G$ is a $CD1$-group with $Z(G)\neq 1$.
Then by Lemma 2.8 $|Z(G)|=2$ and by Lemma 2.9 $b_k=-a_k$. Thus, by Propositions 3.2, 
$G$ is a $2$-group 
of central type with $|Z(G)|=2$.

Conversely, if $G$ is  a $2$-group of central type   with $|Z(G)|=2$, then by
Proposition 3.2 $G$ is a $CD1$-group with $|Z(G)|\neq 1$,
as required. 
\end{proof}

\section{$DD$-groups vis-a-vis extreme $CD1$-groups}

In [11], Seitz proved the following theorem:
\begin{theorem} The group $G$ has only one non-linear irreducible character
if and only if $G$ is of one of the following two types:
\begin{enumeratei}
\item $G$ is an extra-special $2$-group of order $2^{2m+1}$.
The degree pattern of $G$ is $(1^{(2^{2m})},2^m)$.

\item $G$ is a doubly transitive Frobenius group of order $(p^n-1)p^n$ 
with a cyclic
complement. The degree pattern of $G$ is $(1^{(p^n-1)},p^n-1)$.
\end{enumeratei}
\end{theorem}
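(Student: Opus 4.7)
The reverse direction in both cases is a direct character-table verification. For the forward direction, let $\chi \in \irr G$ be the unique nonlinear irreducible, of degree $d$, and set $a = [G:G']$ and $b = |G'|$. From $|G| = a + d^2$ one obtains $d^2 = a(b-1)$. Applying column orthogonality at a nontrivial $g$ against the identity column, and noting that $\sum_{\lambda \text{ linear}} \overline{\lambda(g)}$ equals $a$ for $g \in G'$ and $0$ otherwise, one finds $\chi(g) = -a/d$ for $g \in G' \setminus \{1\}$ and $\chi(g) = 0$ for $g \notin G'$. Since $-a/d$ is a rational algebraic integer, $d \mid a$; writing $a = de$ gives $d = e(b-1)$ and $|G| = e^2 b(b-1)$.

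Since no irreducible character of $G$ distinguishes two elements of $G' \setminus \{1\}$, this set forms a single $G$-conjugacy class, so $G'$ is a minimal normal subgroup of $G$, necessarily an elementary abelian $p$-group of order $b = p^n$. Because $\chi$ vanishes off $G'$, $\ker\chi \subseteq G'$, and $\ker\chi \cap G' \subseteq \bigcap_i \ker\chi_i = \{1\}$ forces $\ker\chi = 1$, so $\chi$ is faithful. Hence $Z(G) = Z(\chi)$ is cyclic and contained in $G' \cup \{1\}$, so $Z(G) \le G'$. I would now split into two cases according to whether $Z(G)$ is trivial.

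If $Z(G) \ne 1$, then $Z(G) = G'$ (being a $G$-invariant subgroup inside the single orbit $G' \setminus \{1\}$); cyclicity plus elementary abelianity force $|G'| = p$, and since $G' \le Z(G)$ the class $G' \setminus \{1\}$ has size $1$, so $p = 2$, $|G'| = 2$, $d = e$, and $|G| = 2d^2$. A Sylow argument shows $G$ is a $2$-group: for any odd prime $q \mid |G|$, a Sylow $q$-subgroup $Q$ intersects $G'$ trivially, so $\chi|_Q$ vanishes off the identity and $|Q|$ divides $d$; combined with $v_q(|G|) = 2v_q(d)$, this forces $|Q| = 1$. Since $G$ has class $2$ with $Z(G) = G'$ of order $2$, the commutator induces an alternating $\mathbb F_2$-bilinear form on $G/Z(G)$ whose radical is exactly $Z(G)/Z(G) = 0$; non-degeneracy then forces $G/Z(G)$ to have exponent $2$ (else $2\bar x$ would lie in the radical for any order-$4$ element $\bar x$), making $G$ extra-special of order $2^{2m+1}$ with $d = 2^m$.

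If $Z(G) = 1$, a direct inner-product computation yields $\langle \chi|_{G'}, \lambda\rangle = e$ for every nontrivial $\lambda \in \widehat{G'}$, so by Clifford's theorem the nontrivial characters of $G'$ form a single $G$-orbit and $\chi = \mathrm{Ind}^G_I \psi$ with $\psi \in \mathrm{Irr}(I)$ of degree $e$, where $I = I_G(\lambda)$ has index $b-1$ in $G$ and $[I:G'] = e^2$; in fact $I = C_G(G')$, so $G' \le Z(I)$. The principal technical obstacle is to exclude $e > 1$: uniqueness of $\chi$ translates into $|\mathrm{Irr}(I\mid\lambda)| = 1$ for every $\lambda \ne 1$, yet $I/G'$ is abelian (as a subquotient of the abelian group $G/G'$). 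If $I$ were abelian, Gallagher's theorem would give $|\mathrm{Irr}(I\mid\lambda)| = [I:G'] = e^2 > 1$ additional nonlinear characters of $G$, a contradiction; the non-abelian case requires $I' = G'$ together with a non-degenerate alternating commutator form on $I/G'$ detecting every $\lambda \ne 1$, and a rank argument using the Camina-pair structure $(G, G')$ together with $Z(G) = 1$ shows this to fail. Hence $e = 1$, $I = G'$, and $G/G'$ acts regularly on $G' \setminus \{1\}$, so $G$ is a Frobenius group with elementary abelian kernel $G'$ of order $p^n$ and complement of order $p^n - 1$, i.e., a doubly transitive Frobenius group. Cyclicity of the complement follows from the fact that an abelian subgroup of $\GL(n, p)$ of order $p^n - 1$ must lie in a maximal torus, and the unique such torus up to conjugacy is the Singer cycle, which is cyclic.
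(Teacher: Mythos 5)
The paper itself offers no proof of this statement: it is Seitz's theorem, quoted from [11], so your argument has to stand on its own. Its first half does. The computation $\chi(g)=-a/d$ on $G'\setminus\{1\}$ and $\chi=0$ off $G'$, the conclusions $d=e(b-1)$, $|G|=e^2b(b-1)$, that $G'\setminus\{1\}$ is a single class and $G'$ a minimal normal elementary abelian $p$-group, faithfulness of $\chi$, $Z(G)\le G'$, and the whole case $Z(G)\neq 1$ (the Sylow argument forcing a $2$-group and the commutator-form argument forcing $G/Z(G)$ of exponent $2$, hence $G$ extra-special) are correct and essentially complete.

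The genuine gap is the case $Z(G)=1$, i.e.\ excluding $e>1$, which is precisely the hard content of the theorem, and your text does not prove it: the claim $I=C_G(G')$ is unjustified (a point stabilizer of the action on $\widehat{G'}$ need not coincide with the kernel $C_G(G')$ of that action), the claim that a non-abelian $I$ forces $I'=G'$ does not follow from minimality of $G'$ (since $I'$ is normal only in $I$, not in $G$), and the decisive step is delegated to an unspecified ``rank argument using the Camina-pair structure,'' which is exactly what needs to be supplied. The final cyclicity claim is also not a citable fact as stated (``an abelian subgroup of $\GL(n,p)$ of order $p^n-1$ lies in a maximal torus''); replace it by the standard argument that an abelian group acting regularly on $V\setminus\{0\}$ generates a commutative algebra acting irreducibly, hence a field, so the complement embeds in the multiplicative group of $\GF{p^n}$ (or: an abelian Frobenius complement is cyclic). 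One workable way to close the main gap: for a prime $q\neq p$ dividing $e$, a Sylow $q$-subgroup meets $G'$ trivially, so $\chi$ restricted to it vanishes off $1$ and its order divides $d=e(b-1)$; comparing $q$-parts with $|G|=e^2b(b-1)$ gives $q\nmid e$, so $e$ is a power of $p$. Then, since $G/G'$ is abelian and $G'$ is a $p$-group, the Sylow $p$-subgroup $P$ is normal; the centralizer-order count $|C_G(x)|=e^2(b-1)$ for $x\notin G'$ forces $Z(P)\le G'$, and minimality gives $Z(P)=G'$ and (as $P$ is non-abelian when $e>1$) $P'=G'$. A Schur--Zassenhaus complement $H$ of order $b-1$ satisfies $[P,H]\le G'=Z(P)$, so $H$ fixes every commutator $[x,y]$ with $x,y\in P$ and therefore centralizes $P'=G'$; then $G'\le Z(G)$, contradicting $Z(G)=1$. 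Hence $e=1$ and $G$ is a doubly transitive Frobenius group with abelian, hence cyclic, complement. Without an argument of this kind your proof of case (2) is incomplete.
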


In [1], Berkovich, Chillag and Herzog generalized  Seitz's theorem.
They proved
\begin{theorem}  The non-linear irreducible characters of the group
$G$ are of distinct degrees if and only if either $G$ is of one of the two types
{\rm{(a)}} and {\rm{(b)}} of Theorem $4.1$ or  $G$ is the following group:

\noindent{\rm{(c)}} $G$ is the Frobenius group of order $2^33^2$, with the Frobenius
kernel of order $3^2$ and with a quaternion Frobenius
complement of order $2^3$. The degree pattern of $G$ is $(1^{(4)},2,8)$.
\end{theorem}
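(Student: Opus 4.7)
The plan is to prove the biconditional by handling each direction separately, with essentially all the technical work concentrated in the ``only if'' direction.

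For the ``if'' direction, I would verify by direct character-theoretic computation that each of the three listed group types is a DD-group. In case (a), standard theory of extra-special $2$-groups of order $2^{2m+1}$ yields $2^{2m}$ linear characters and a single non-linear character of degree $2^m$, so DD is trivially satisfied. In case (b), Theorem 18.7 of [8] produces $p^n-1$ linear characters and one non-linear character of degree $p^n-1$. In case (c), where $G = F \rtimes Q_8$ with $F \cong (\mathbb{Z}/3)^2$, the four linear characters factor through $G/G' \cong (\mathbb{Z}/2)^2$; inflation from $Q_8$ contributes one non-linear character of degree $2$; and the unique orbit of $Q_8$ on $\irr{F}\setminus\{1_F\}$ (coming from the doubly transitive Frobenius structure) induces to a single character of degree $|Q_8| = 8$. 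So the non-linear degrees are exactly $\{2,8\}$.

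For the ``only if'' direction, let $G$ be a DD-group. If $G$ has a unique non-linear irreducible character, Seitz's Theorem 4.1 immediately yields (a) or (b). So the heart of the matter is the case where $G$ has $r \geq 2$ non-linear irreducible characters of strictly increasing degrees $d_1 < \cdots < d_r$. My plan exploits two easy starting observations: first, since each $d_i$ determines its character uniquely among the non-linear characters, the Galois group $\mathrm{Gal}(\mathbb{Q}(\zeta_{|G|})/\mathbb{Q})$ fixes every non-linear irreducible character, so they are all rational-valued; second, $|G| = [G:G'] + \sum_i d_i^2$ with every $d_i$ dividing $|G|$. The key structural claim I would try to establish is that $G$ must be a $CD1$-group in the sense of this paper---i.e., that $\chi_k$ vanishes on all but two conjugacy classes---at which point Theorem 1 becomes available. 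In the $2$-group-of-central-type branch of Theorem 1, a further degree-count argument based on $\chi_k(1)^2 = |G|/2$ forces $G$ extra-special, giving $r=1$, which is excluded here; so $G$ is a doubly transitive Frobenius group with kernel $F$ and complement $H$. A Clifford-theoretic decomposition of $\irr{G}$ into characters inflated from $H$ plus a single induced character of degree $|H|$ then shows the non-linear degrees of $H$ must be distinct and all different from $|H|$. Invoking Zassenhaus's classification of Frobenius complements of doubly transitive Frobenius groups, the only option compatible with $r \geq 2$ is $H \cong Q_8$ with $|F|=9$, producing exactly case (c).

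I expect the main obstacle to be the step of proving that a DD-group with $r \geq 2$ must be $CD1$. Rationality of $\chi_k$ alone does not obviously force vanishing on all but two classes; one needs to combine it with the centralizer bound $|\chi_k(g)|^2 \leq |C_G(g)|$, the integrality of $\chi_k(g)$, and the degree identity $\sum_i d_i^2 = |G| - [G:G']$ to constrain how far $\chi_k$ can deviate from zero off the identity class. An alternative route would be induction on $|G|$ via a minimal normal subgroup $N$: the quotient $G/N$ is still a DD-group (its non-linear degrees form a subset of those of $G$), hence by the inductive hypothesis is one of (a), (b), (c), and Clifford theory applied to extensions through $N$, together with the DD condition, should restrict the possibilities drastically. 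Either way, this reduction step is where the real work lies, and it would require careful bookkeeping of character degrees modulo the prime divisors of $|G|$.
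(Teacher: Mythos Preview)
The paper does not contain its own proof of this statement. Theorem~4.2 is quoted from reference~[1] (Berkovich, Chillag, Herzog, 1992) as an external input and is never proved here; it is used only in the proof of Proposition~2. So there is no in-paper argument to compare your proposal against.

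Regarding the proposal itself, there is a concrete gap in your ``only if'' strategy. You write that once $G$ is shown to be a $CD1$-group, ``Theorem~1 becomes available.'' But Theorem~1 classifies only the \emph{extreme} $CD1$-groups, namely those with $b_k=-1$ or $b_k=-a_k$; the paper explicitly exhibits non-extreme $CD1$-groups (two groups of order $54$ with $a_k=6$, $b_k=-3$) and states that classifying general $CD1$-groups is open. So after ``$G$ is $CD1$'' you would still need a separate argument that the DD hypothesis forces $b_k\in\{-1,-a_k\}$ before Theorem~1 applies, and your sketch does not address this. Note also that the paper's own route to ``DD $\Rightarrow$ extreme $CD1$'' (Proposition~2) \emph{assumes} Theorem~4.2 and checks the three listed groups one by one, so you cannot borrow that implication without circularity.

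Your acknowledged main obstacle---proving directly that a DD-group with at least two non-linear characters has $\chi_k$ vanishing off two classes---is genuine, and neither of the two heuristics you offer (rationality plus centralizer bounds, or induction through a minimal normal subgroup) is fleshed out enough to see whether it closes. The inductive route via $G/N$ and Clifford theory is in fact closer to how the original proof in~[1] proceeds, and pursuing it carefully is more likely to succeed than trying to route the argument through Theorem~1 of the present paper.
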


Groups with non-linear irreducible characters of  distinct degrees 
were called $DD$-groups. It follows immediately from Theorem 4.2 that
$DD$-groups are solvable.
 
The extreme $CD1$-groups were determined in our Theorem 1, which will be restated
now:
\begin{thm1}
A group $G$ is an extreme $CD1$-group if and only if one
of the following holds:
\begin{enumeratei}
\item $b_k=-a_k$ and $G$ is a $2$-group of central type with $|Z(G)|=2$;
\item $b_k=-1$ and $G$ is a a doubly transitive Frobenius group.
\end{enumeratei}
\end{thm1}

Our aim is to show that the set of extreme $CD1$-groups properly contains the set
of $DD$-groups. Thus 
our Theorem 1 is a proper generalization of Theorem 4.2.

\begin{prop2}
The set of extreme $CD1$-groups properly contains
the set of $DD$-groups.
\end{prop2}
\begin{proof} 
First we shall show that the set of $DD$-groups is contained in the
set of extreme $CD1$-groups. Indeed, if $G$ is a $DD$-group, then by Theorem 4.2,
$G$ is either a doubly transitive Frobenius group, 
or it is an extra-special $2$-group.
In the former case $G$ is an extreme $CD1$-group by Proposition 3.3. So it remains 
only to show 
that  extra-special $2$-groups are $2$-groups of central type with $|Z(G)|=2$.
It is well known that an extra-special $2$-group $G$ is of order $2^{2m+1}$
for some positive integer $m$  and satisfies the following two properties:
$|Z(G)|=2$ and $G$ contains an irreducible character of degree $2^m$
(see, for example, [8], Example 7.6(b)). Hence $G$ 
is a $2$-group of central type with $|Z(G)|=2$, as required. The proof
of our claim is complete.

It remains only to show that the set of extreme $CD1$-groups properly 
contains the set
of $DD$-groups. We shall show  that  extreme $CD1$-groups of each
type (1) and (2), as described in Theorem 1, contain groups which are not $DD$-groups.

As an example of type (1), consider the group $G=Q_{16}:C_2$ of order $32$.
The center of $G$ is of order $2$ and it has an irreducible character of 
degree $4$, so $G$ is a $2$-group of central type with $|Z(G)|=2$. By Theorem 1,
$G$ is an extreme $CD1$-group. But $G$ is not extra-special, since $|G'|=4$, so 
in view of Theorem 4.2 $G$ is not a $DD$-group. There are four other groups of order
$32$ with the same property.

The examples of type (2) are more interesting. By 
 the results of
Zassenhaus in [12] (see also [7], Section 20.7) 
there are the following
three non-solvable  Frobenius groups of order $(p^2-1)p^2$ for $p=11,29,59$:  

(i) $G_1=FH$, where the Frobenius kernel $F$ is elementary abelian of order $11^2$
and the Frobenius complement $H$ of order $120$ is isomorphic to $SL(2,5)$. 

(ii) $G_2=FH$, where the Frobenius kernel $F$ is elementary abelian of order $29^2$ 
and the Frobenius complement $H$ of order  $840$ is isomorphic to
$H=SL(2,5)\times C_7$.

(iii) $G_3=FH$, where the Frobenius kernel $F$ is elementary abelian of order $59^2$ 
and the Frobenius complement $H$ of order $3480$ is isomorphic to
$H=SL(2,5)\times C_{29}$.

These groups are non-solvable extreme $CD1$-groups by Theorem 1 and $G_1$ is even
perfect. In view of Theorem 4.2, they are not $DD$-groups.

The proof of Proposition 2 is now complete.

\end{proof}
\section{Charaterizations of $CD1$-groups with $a_k$ a power of a prime}

In this section we shall prove the following classification theorem:
\begin{thm5} Let $r$ be a prime.
Then $G$ is a  $CD1$-group with $a_k=r^s$ for some positive integer $s$
if and only if one of the following cases holds:
\begin{enumeratei}
\item $G$ is a $2$-group of central type with $|Z(G)|=2$ and $|G|=2^{2s+1}$
for some positive integer $s$;
\item  $G$ is a doubly transitive Frobenius group of order $(2^n-1)2^n$,
where $2^n-1=r$ is a Mersenne prime;
\item $G$ is a doubly transitive Frobenius group of order $(3^2-1)3^2=72$;
\item $G$ is a doubly transitive Frobenius group of order $(p-1)p$, 
where $p=2^n+1$ is a Fermat prime. 
\end{enumeratei}
\end{thm5}
\begin{proof}
Suppose, first, that  $r$ is a prime and $G$ is a  $CD1$-group with $a_k=r^s$ 
for some positive integer $s$.
If $r=p$, then (1) holds by Proposition 3.2. 

So suppose that $r\neq p$. Then $p\nmid a_k$ and, by Proposition 3.1,
$G$ is a doubly transitive Frobenius group of order $(p^n-1)p^n$.  
By Proposition 3.3, 
$a_k=|b^G|=p^n-1$ and since $a_k=r^s$, it follows that
$$p^n-r^s=1.$$
Since $p$ and $r$ are primes, Lemma 19.3 in [10] implies
that one of the following three cases holds:

(i) $s=1$, $p=2$ and $r=2^n-1$ is a Mersenne prime;

(ii) $s=3$, $r=2$, $p=3$ and $n=2$;

(iii) $r=2$, $n=1$ and $p=2^s+1 $ is a Fermat prime.

If (i) holds, then $p=2$, $a_k=r=2^n-1$ is a Mersenne prime and
$G$ is a doubly transitive Frobenius group of order $(2^n-1)2^n=r(r+1)$, 
as claimed in (2).

If (ii) holds, then  $r=2$, $a_k=2^3$, $p=3$, $n=2$ and $G$ is a 
doubly transitive Frobenius group
of order $(3^2-1)3^2=72$,
as claimed in (3).

Finally, if (iii) holds, then $r=2$, $n=1$,
$p=2^s+1 $ is a Fermat prime and $G$ is a 
doubly transitive Frobenius group of order
$(p-1)p$,
as claimed in (4).
 
Conversely, suppose that $G$ is a group for which one of 
the cases (1), (2), (3) or (4) holds.
If case (1) holds, then by Proposition 3.2  $G$ is a  $CD1$-group with 
$p=2$ and $a_k=2^s$ 
for some positive integer $s$, as required. 
 
In the other three cases, $G$ is a doubly transitive Frobenius group 
of order $(p^n-1)p^n$, where $p$ is a prime and $p^n-1=r^s$ for some prime $r$ and
some positive integer $s$. Thus it follows by Proposition 3.3 that
that $G$ is a $DC1$-group and $a_k=p^n-1=r^s$, 
as required.

The proof of the theorem is complete.
\end{proof}

The next corollary of Theorem 5 is the last result of this paper. 
We shall denote by $S_3$, $D_8$ and $Q_8$
the following groups: the symmetric group on $3$ letters,
the dihedral group of order $8$ and the quaternion group of order $8$,
respectively.
\begin{corollary}
The group  $G$ is a $CD1$-group with
$$a_k=r$$
for some prime $r$ if and only if either $r=2$ and $G$ is isomorphic to one of the
groups: $S_3$, $D_8$ and $Q_8$, or $r=2^n-1$ is a Mersenne prime and $G$ is a 
doubly transitive Frobenius
group of order $(2^n-1)2^n$.  
\end{corollary}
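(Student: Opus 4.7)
The plan is to deduce Corollary 5.1 directly from Theorem 5 by specializing to $s=1$ and then sifting through the four resulting cases to see which ones admit $a_k$ prime. Assuming $G$ is a $CD1$-group with $a_k = r$ prime, Theorem 5 places $G$ in one of four families; I just need to check which values of the parameters make $a_k$ prime.

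I would first treat case (1) of Theorem 5: here $|G| = 2^{2s+1}$ and $a_k = 2^s$, so $a_k$ prime forces $s=1$, $|G|=8$, and $r=2$. Among groups of order $8$, the three abelian ones have only linear characters and so cannot have $[G:Z(G)] = a_k^2 = 4$; the two non-abelian ones, $D_8$ and $Q_8$, both have $|Z(G)|=2$ and an irreducible character of degree $2$, hence are $2$-groups of central type with $|Z(G)|=2$. Case (1) therefore contributes exactly $D_8$ and $Q_8$. Case (2) of Theorem 5 directly yields the Mersenne family: $a_k = 2^n-1$ is prescribed to be a Mersenne prime, so this case contributes, for each Mersenne prime $r = 2^n-1$, the doubly transitive Frobenius group of order $(2^n-1)2^n$. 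Case (3) of Theorem 5 has $a_k = p^n-1 = 8$, which is not prime, so it contributes nothing. In case (4), $a_k = p-1 = 2^n$ is prime only for $n=1$, forcing $p=3$, $r=2$, and $G$ a doubly transitive Frobenius group of order $(3-1)\cdot 3 = 6$, namely $S_3$.

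Collecting the cases, for $r=2$ we obtain exactly $\{S_3, D_8, Q_8\}$ (from cases (4), (1), (1) respectively), and for $r > 2$ we obtain exactly the Mersenne family from case (2). The converse is immediate: each group listed in the corollary belongs to one of the four families of Theorem 5 with $s=1$, and Theorem 5 guarantees that every such group is a $CD1$-group with $a_k = r$.

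There is no substantial obstacle here; the corollary is essentially bookkeeping on Theorem 5. The only ancillary verification needed is the identification of $D_8$ and $Q_8$ as the unique groups of order $8$ of central type with $|Z(G)|=2$, which is a standard consequence of the classification of groups of order $8$.
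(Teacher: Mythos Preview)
Your proposal is correct and follows essentially the same approach as the paper: both proofs reduce to checking the four cases of Theorem 5 under the constraint $s=1$, arriving at $D_8$, $Q_8$ from case (1), the Mersenne family from case (2), nothing from case (3), and $S_3$ from case (4). Your write-up is slightly more explicit about the converse direction and about why $D_8$ and $Q_8$ are the only order-$8$ groups of central type with $|Z(G)|=2$, but the argument is otherwise the same bookkeeping on Theorem 5.
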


\begin{proof}
We need to determine all  groups with  $s=1$ in Theorem 5.

In case (1) of Theorem 5, $r=2$  and the assumption that $s=1$ 
implies that $G$ is a $2$-group
of central type of order $8$. Hence $G$ is either $D_8$ or $Q_8$.

In case (2), it follows by Proposition 3.3 that $a_k=2^n-1=r$, 
a  Mersenne prime. Therefore
all groups in (2) satisfy our assumption.

In case (3), it follows by Proposition 3.3 that $a_k=3^2-1=8$, so 
$G$ does not satisfy our
assumption.

Finally, in case (4), it follows by Proposition 3.3 that $a_k=p-1=2^n$. Therefore $r=2$
and $s=1$ implies that $n=1$. Hence $p=3$ is a Fermat prime and $G$ is a Frobenius group 
of order  $6$. Thus $G$ is $S_3$.

The proof of the corollary is complete.

\end{proof}


\begin{thebibliography}{99 \,}

\setlength{\parskip}{4pt}


\bibitem{} Ya. Berkovich, D. Chillag, M. Herzog, \emph{Finite groups in which the degrees of 
the nonlinear irreducible characters are distinct}, Proc. Amer. Math. Soc. 115 (1992), 955-959.

\bibitem{} Ya. Berkovich, I. M. Isaacs, L. Kazarin, \emph{Groups with distinct monolithic character
degrees}, J. Algebra 216 (1999), 448-480.

\bibitem{} S. Dolfi, G. Navarro, P. H. Tiem, \emph{Finite groups whose same degree characters are
Galois conjugate}, Israel J. Math. 198 (2013), 283-331.

\bibitem{} S. Dolfi, M. K. Yadav, \emph{Finite groups whose non-linear irreducible characters
of the same degree are Galois conjugate}, J. Algebra 452 (2016), 1-16.
 
\bibitem{} W. Feit, \emph{Characters of finite groups}, W. A. Benjamin Inc., New York-Amsterdam, 1967.

\bibitem{} S. S. Gagola, \emph{Characters vanishing on all but two conjugacy
classes}, Pacific J. Math. 109 (1983), 363-385.

\bibitem{} M. Hall, \emph{The theory of groups}, AMS Chelsea Publishing, Providence, Rhode Island, 1999.

\bibitem{} B. Huppert, \emph{Character theory of finite groups}, Walter de Gruyter, Berlin, 1998.

\bibitem{} M. Loukaki, \emph{On distinct character degrees}, Israel J. Math. 159 (2007), 93-107.

\bibitem{} D. Passman, \emph{Permutation groups}, W. A. Benjamin Inc., New York-Amsterdam, 1968.

\bibitem{} G. Seitz, \emph{Finite groups having only one irreducible representation of 
degree greater than one}, Proc. Amer. Math. Soc. 19 (1968), 459-461.

\bibitem{} H. Zassenhaus, \emph{Kennzeichnung endlicher linear Gruppen als Permutationsgruppen}, 
Hamburg Abh. 11 (1936), 17-40.

\end{thebibliography}
\end{document}